\DeclareFontFamily{T1}{calligra}{}
\DeclareFontShape{T1}{calligra}{m}{n}{<-> s * [1.0] callig15}{}
\DeclareMathAlphabet{\mathcalligra}{T1}{calligra}{m}{n}
\DeclareFontFamily{OT1}{pzc}{}
\DeclareFontShape{OT1}{pzc}{m}{it}{<-> s * [1.30] pzcmi7t}{}
\DeclareMathAlphabet{\mathpzc}{OT1}{pzc}{m}{it}
\DeclareMathAlphabet{\pazocal}{OMS}{zplm}{m}{n}
\DeclareMathAlphabet\mathbfcal{OMS}{cmsy}{b}{n}
\DeclareMathAlphabet{\mathbbold}{U}{bbold}{m}{n}
\newtheorem{theorem}{Theorem}[section]
\newcommand\bGamma{{\boldsymbol{\Gamma}}}
\renewcommand\AA{{\mathbb A}} 
\newcommand\CC{{\mathbb C}}
\newcommand\DD{{\mathbb D}}
\newcommand\EE{{\mathbb E}}
\newcommand\GG{{\mathbb G}}
\newcommand\HH{{\mathbb H}}
\newcommand\PP{{\mathbb P}}
\newcommand\ZZ{{\mathbb Z}}
\newcommand\CCC{{\mathcal C}}
\newcommand\EEE{{\mathcal E}}
\newcommand\bFFF{\mathbfcal F}
\newcommand\FFF{{\mathcal F}}
\newcommand\GGG{{\mathcal G}}
\newcommand\HHH{{\mathcal H}}
\newcommand\III{{\pazocal I}}
\newcommand\JJJ{{\pazocal J}}
\newcommand\LLL{{\mathcal L}}
\newcommand\MMM{{\mathcal M}}
\newcommand\NNN{{\mathcal N}}
\newcommand\OOO{{\mathcal O}}
\newcommand{\PPP}{\mathcal{P}}
\newcommand{\PPPb}{\pazocal{P}}
\newcommand\SSS{{\mathcal S}}
\newcommand\TTT{{\mathcal T}}
\newcommand\YYY{{\mathcal Y}}
\newcommand\ZZZ{{\mathcal Z}}
\newcommand{\Ext}{\operatorname{Ext}\nolimits}
\newcommand{\END}{{\mathcal End}}
\newcommand{\EXT}{{\mathcal Ext}}
\newcommand{\hExt}{{\mathbbold{Ext}}}
\newcommand{\ev}{\operatorname{ev}\nolimits}
\newcommand{\Hilb}{\operatorname{Hilb}\nolimits}
\newcommand{\HOM}{{\mathcal Hom}}
\newcommand{\im}{\operatorname{im}\nolimits}
\newcommand\Pic{\operatorname{Pic}\nolimits}
\newcommand\Prym{\operatorname{Prym}\nolimits}
\newcommand\Res{\operatorname{Res}\nolimits}
\newcommand\Supp{\operatorname{Supp}\nolimits}
\newcommand{\Tr}{\operatorname{Tr}\nolimits}
\newcommand\bdot{{\raisebox{1pt}{\scalebox{0.4}{$\bullet$}}}}
\newcommand{\ud}{\hspace{0.3ex}\mathrm{d}\hspace{-0.1ex}}
\renewcommand\tilde[1]{\widetilde{#1}}
\newcommand\lra{{\longrightarrow}}
\newlength{\rrrr}
\newcommand\into{\hookrightarrow}
\newcommand{\isoto}{{\lra\hspace{-1.3 em}
\raisebox{ 0.6 ex}{$\textstyle\sim$}\hspace{0.8 em}}}
\newcommand{\mapstoo}[1]{\:
\xymatrix@1{\ar@{|->} [r] ^-{#1}&}\:}
\def\geq{\geqslant}
\newtheorem*{main-theo}{Main Theorem}
\theoremstyle{definition}
\newtheorem{defn}[theorem]{Definition}
\newtheorem{remark}[theorem]{Remark}
\begin{document}

\title[Symplectic moduli of sheaves on Poisson surfaces]{Symplectic moduli space of
1-dimensional sheaves on Poisson surfaces}

\author[I. Biswas]{Indranil Biswas}

\address{Department of Mathematics, Shiv Nadar University, NH91, Tehsil
Dadri, Greater Noida, Uttar Pradesh 201314, India}

\email{indranil.biswas@snu.edu.in, indranil29@gmail.com}

\author[D. Markushevich]{Dimitri Markushevich} 

\address{Univ. Lille, CNRS, UMR 8524 -- Laboratoire Paul Painlev\'e, F-59000 Lille, France}

\email{dimitri.markouchevitch@univ-lille.fr}

\subjclass[2010]{14J60, 53D30, 14D21}

\keywords{Poisson surface, moduli space of sheaves, symplectic structure}

\begin{abstract}
We show that the Poisson structure on the smooth locus of a moduli space of 1-dimensional sheaves on a 
Poisson projective surface $X$ over $\CC$ is a reduction of a natural symplectic structure.
\end{abstract}

\maketitle

\selectlanguage{english}

\tableofcontents
\section*{Introduction}

Let $S$ be a complex projective surface with a movable anticanonical class, and let $\gamma$ be a
nonzero section of the anticanonical sheaf $\omega_S^{-1}=\OOO_S(-K_S)$.
Then $(S,\,\gamma)$ is a Poisson surface. It is known that any moduli space $\mathfrak M_S$ of simple
sheaves on $S$
also carries a Poisson structure. The Poisson structure on $\mathfrak M_S$,
which is denoted by $\PPP$, is induced by $\gamma$;
this $\PPP$ can be pointwise described as follows: We have the natural identification 
$T_{[\FFF]}\mathfrak M_S\,=\,\Ext^1(\FFF,\,\FFF)$ of the tangent space to $\mathfrak M_S$ at a point $[\FFF]$ 
representing a simple sheaf $\FFF$ on $S$. Therefore, by Serre duality the cotangent space $T^*_{[\FFF]}\mathfrak M_S$ 
is identified with $\Ext^1(\FFF,\,\FFF\otimes \omega_S)$. The Poisson structure on the cotangent space
at $[\FFF]$ is given by the skew-symmetric bilinear form 
\begin{equation}\label{poi}
\PPP\,:\,\Ext^1(\FFF,\,\FFF\otimes \omega_S)\times\Ext^1(\FFF,\,\FFF\otimes \omega_S)\,
\xrightarrow{\,\,\cup\,\,}\,\Ext^2(\FFF,\,\FFF\otimes \omega_S^2)
\end{equation}
$$
\xrightarrow{\,\,\otimes\gamma\,\,}\, \Ext^2(\FFF,\,\FFF\otimes \omega_S)
\,\xrightarrow{\,\,\Tr\,\,}\, H^2(S,\, \omega_S)\,=\,\CC.
$$
To describe the symplectic leaves of the Poisson structure $\PPP$ on
$\mathfrak M_S$, assume that the zero locus $D\,=\,{\rm Div}(\gamma)$ of $\gamma$ is
smooth; then it is a union of at most two elliptic curves. Let $\FFF$ be a simple sheaf on $S$. Assume that the restriction 
$\FFF\big\vert_D$ is torsion-free as an $\OOO_D$--module. Then the symplectic leaf $\SSS_{[\FFF]}$ of the
$\PPP$ passing through $[\FFF]$ is the locus of flat deformations $\FFF'$ of $\FFF$ which remain
simple and satisfies the condition that $\FFF'\big\vert_D\,\simeq\, \FFF\big\vert_D$.
This was proved for torsion-free sheaves in \cite{Bot-1} and extended to all simple sheaves in \cite{HM}, \cite{R}.

The symplectic foliation is one of the ways to associate symplectic varieties to a Poisson structure. The 
symplectic structures on the leaves of this foliation are restrictions of the Poisson structure of the 
ambient variety. The natural question is whether we can go the opposite way and represent a given 
Poisson structure as the reduction of a symplectic structure defined on a bigger variety. To be more
precise, we say that a Poisson variety $(Y,\,\PPP)$ is a {\em Poisson reduction} of a symplectic variety
$(M,\,\sigma)$ if there exists a Poisson submersion $(M,\,\sigma^{-1})\,\longrightarrow\, (Y,\,c\cdot\PPP)$
with totally isotropic fibers, where $\sigma^{-1}\,\in\, H^0(M,\, \bigwedge^2\TTT_M)$ denotes
the Poisson bivector, dual to the symplectic form $\sigma\,\in\, H^0(M,\,\Omega^2_M)$, and $c\,\in\, \CC$
is a constant. {Equivalently, one says that $(M,\,\sigma)$ is a {\em symplectic realization} of the
Poisson structure $(Y,\,\PPP)$.} Clearly, this is 
not always possible, for example, a nonzero and non-symplectic Poisson structure on a compact complex surface 
can't be obtained as a reduction of a symplectic structure.

Applied to our situation, the question can be 
formulated as follows: Can we add to the simple sheaves $\FFF$ parametrized by $\mathfrak M_S$ some additional 
structure, an enhancement or a decoration, in such a way that the moduli space of enhanced sheaves 
$\widetilde{\mathfrak M}_S$ becomes symplectic and the natural map $\widetilde{\mathfrak M}_S\, \longrightarrow\,
\mathfrak M_S$ that simply forgets the enhancement is a Poisson submersion with isotropic fibers?

In the present paper we give an affirmative answer to this question for certain moduli spaces of 1-dimensional
sheaves on the Poisson surface $S$. We consider the sheaves $\FFF$ supported on smooth curves
$C\,\subset \,S$, transversal to $D$, which are invertible when viewed as $\OOO_C$--modules. Such a sheaf can
be expressed in the form
$\FFF\,=\,j_{C*}\LLL$, where $\LLL\,\in\,\Pic^d(C)$ is a line bundle of degree $d$ on $C$ and
$j_C\,:\,C\,\into\, S$ denotes the inclusion map. When $C$ runs over a complete family of
smooth deformations of a given curve in $S$, the union of their Picard varieties of degree $d$ fills
an open subset of ${\mathfrak M}_S$; this open subset of ${\mathfrak M}_S$ is denoted by $\JJJ^d$.

We construct an enhancement of the sheaves $\FFF\,=\,j_{C*}\LLL$ such that the
corresponding enhanced moduli space is symplectic in three different ways. The construction, realized in Section \ref{prym}, goes
as follows: Let $\Delta\,=\,D\cap C$; take another copy $C'$ of $C$ and glue $C,\,C'$ together at the
points of $\Delta$ into a reducible normal crossing curve $\Gamma$ with two components. Then a line
bundle on $\Gamma$ can be thought of as the result of gluing together a pair
of line bundles $\LLL\, \longrightarrow\, C$ and $\LLL' \, \longrightarrow\, C'$. The gluing of $\LLL$
with $\LLL'$ is done by identifying the fibers of $\LLL,\, \LLL'$ at the $\delta$ points
of $\Delta\,=\,\{x_1,\, \cdots,\, x_\delta\}$.
We can scale each identification $\LLL\big\vert_{x_i}\,=\,\LLL'\big\vert_{x_i}$ by a nonzero scalar factor $\lambda_i
\,\in\,\CC^*$, thus the torus
$(\CC^*)^\delta$ acts on the family of gluings of $\LLL,\,\LLL'$; obviously, scaling all the identifications
simultaneously by the same factor 
$\lambda\,\in\,\CC^*$ produces an isomorphic line bundle on
$\Gamma$, so the action of $(\CC^*)^\delta$ factors through an action of $(\CC^*)^\delta/\CC^*\,\simeq\,
(\CC^*)^{\delta-1}$. The moduli space parametrizing the gluings of line bundles $\LLL$ on $C$ with the
corresponding inverse $\LLL^{-1}$ on $C'$ form the Prym variety $\Prym(\Gamma)$. We have the natural
short exact sequence
$$
1\, \longrightarrow\, (\CC^*)^\delta/\CC^*\, \longrightarrow\, 
\Prym(\Gamma) \, \longrightarrow\, \Pic(C)\, \longrightarrow\, 0,
$$
in which the surjection $\Prym(\Gamma) \, \longrightarrow\, \Pic(C)$ is the restriction of any line bundle on
$\Gamma$ to $C$. If we perform this construction for all the curves $C_u$ from a complete family
of deformations of $C$ inside $S$ which remain smooth and transversal to $D$, we obtain the family
$\Gamma_u$ of normal crossing curves and the associated relative Prym variety $\PPPb$, which is the union of
$\Prym(\Gamma_u)$ over all $u$. This $\PPPb$ is a symplectic enhanced moduli space, where
the enhancement attached to a line bundle
$\LLL$ on $C_u$ is a gluing datum on $\Delta$, or an isomorphism of restrictions
$\LLL\big\vert_{\Delta}\,\,\isoto\,\, \LLL^{-1}\big\vert_{\Delta}$. The set of enhancements of a given $\LLL$ is a torsor
under the torus $(\CC^*)^\delta/\CC^*\,\simeq\, (\CC^*)^{\delta-1}$ (see Theorem \ref{pr-th} and Remark \ref{any-d}).

In Section 2 we consider the family of 1-dimensional sheaves obtained as an open part of the relative Picard 
scheme of line bundles $\LLL$ of given degree on the curves $Z$ moving in the complete family of 
deformations of $C+D$. The said family of sheaves fills an open part of the moduli space $\mathfrak M_S$ 
carrying a Poisson structure, and we single out a symplectic leaf in it by framing $\LLL$ to a fixed line 
$\theta$ on $D$ by an isomorphism $\LLL\big\vert_D\,\simeq\, \theta$. According to this definition, if $\LLL$ is framed, 
then its support $Z$ is automatically reducible and has $D$ as an irreducible component. The thus obtained 
symplectic leaves are symplectic realizations of the Poisson structure on $\JJJ^d$, and the enhancement of 
$(C,\,L)\,\in\, \JJJ^d$ is realized by fixing isomorphisms $L\big\vert_{x_i}\,\simeq\, \theta\big\vert_{x_i}$
at all the points $x_i$ of $C\cap D$.

In both Sections 1 and 2 we assume the divisor $D$ of the Poisson bivector $\gamma$ to be smooth, and
in Section 2 the divisor $D$ is also assumed to be 
irreducible. In Section 3, we provide an example of a symplectic realization of $\JJJ^d$ in the 
case when $D$ is reducible and non-reduced, a sum of a component of multiplicity two and a finite number of 
smooth rational components. The surface $S$ in this example is the projectivization $\PP(\EE)$ of the rank
two vector bundle $\EE\,=\,\OOO_X\oplus K_X\otimes\OOO_X(\DD)$ on a smooth projective curve $X$, where $\DD
\,=\,x_1+\ldots+x_l$ 
is a reduced divisor on $X$. Considering the natural projection $p\,:\,S\,\longrightarrow\, X$, we have 
$D\,=\,2D_\infty+p^{-1}(x_1)+\ldots +p^{-1}(x_l)$, where $D_\infty$ is the cross-section of $p$ corresponding to 
the direct summand $\OOO_X$ of $\EE$. We make $C$ run over the open part of the linear system $|K_X^{\otimes 
r}\otimes\OOO_X(r\DD)|$ for some $r>0$, assuming $C$ is transversal to $D$. The Hitchin correspondence 
allows us to interpret the relative Picard $\JJJ^d$ of this family of curves $C$ as a moduli space of Higgs 
bundles on $X$, whose spectral curves are exactly the curves $C$, and the enhancement of the sheaves from 
$\JJJ^d$ is achieved by fixing trivializations of each line bundle $L\in\Pic^d(C)$ 
at the $rl$ points of $D\cap C$. This enhanced moduli space is symplectic.

\section{Symplectic structure on a relative Prymian}
\label{prym}

Let $(S,\,D)$ be as before a Poisson surface with an anticanonical
divisor. We will work with pure 1-dimensional sheaves on the doubling $Y$ of $(S,\,D)$
constructed as follows: Take another copy $(S',\,D')$ of $(S,\,D)$, and let $\rho\, :\, S\,\isoto\, S'$
be the isomorphism given by the identity map. Consider the simple normal crossing
surface $Y$ obtained by identifying $D$ with $D'$ using $\rho\big\vert_D$, so we have $S\cap S'\,=\,D$.
Denote by $\iota$ the involution on $Y$ that permutes the components: $\iota\big\vert_S\,=\,\rho$,
$\iota\big\vert_{S'}
\,=\,\rho^{-1}$. This surface $Y$ is Gorenstein and has trivial canonical class, meaning we have
$\omega_Y\,\simeq\,\OOO_Y$.

Let us fix a component $H$ of the Hilbert scheme of curves in $Y$ 
containing as an open subset the locus of curves of the form $C\cup C'$ such that:
\begin{enumerate}
\item $\rho^*[C']\,=\,[C]$ is fixed, where $[-]$ denotes the class of a curve in the integer cohomology
(or in the N\'eron--Severi group) of the surface, and

\item $C,\, C'$ are smooth, connected, they intersect $D$ transversely and $$C\cap D\,\,=
\,\,C'\cap D\,\,\neq\,\,\varnothing .$$
\end{enumerate}

Denote by $H_0$ this open subset of $H$ and consider the universal family of curves $\bGamma\,
\longrightarrow\, H_0$ over $H_0$. Its fiber over any $u\,\in\, H_0$ is the reducible nodal curve
$\Gamma_u\,=\,C_u\cup C'_u$, where $C_u\,\subset\, S$ and $C'_u\,\subset \,S'$, which has two smooth
components. For any integer $d$, let
\begin{equation}\label{en}
\JJJ^d\,\,=\,\,\mathbf{Pic}^{d,d}(\bGamma/ H_0)
\end{equation}
be the relative
Picard scheme of bidegree $(d,\,d)$ parametrizing the invertible sheaves on $\Gamma_u$ whose
restrictions to both the components are line bundles of degree $d$. When $d\,=\,0$, we obtain the family
$\JJJ\,=\,\JJJ^0$ whose fiber over any $u\,\in\, H_0$ is the connected component $J^0(\Gamma_u)$ of zero
of the generalized Jacobian $J(\Gamma_u)$. This $J^0(\Gamma_u)$ is a connected commutative algebraic group 
that fits in the short exact sequence
\begin{equation}\label{genJ}
1\,\longrightarrow\, (\CC^*)^{\delta-1}\,\longrightarrow\, J^0(\Gamma_u)
\,\longrightarrow\, J(C_u)\times J(C'_u)\,\longrightarrow\, 0,
\end{equation}
where $\delta\,=\,(C_u\cdot D)_S\,=\,\#(C_u\cap C'_u)$. Denoting by $g$ the
common genus of $C_u$ and $C'_u$, we
have $g\,=\,\frac{(C_u)_S^2-(C_u\cdot D)_S}{2}+1$ and $$\dim J^0(\Gamma_u)\,
\,=\,\,2g+\delta-1\,\,=\,\,(C_u)^2_S+1.$$
 
Mukai in \cite{Muk-1} showed that the moduli space $\mathfrak M_S$ of simple sheaves on a smooth projective
surface $S$ with trivial canonical class is smooth and carries an everywhere nondegenerate two-form
whose restriction to the tangent space $T_{[\FFF]}{\mathfrak M}_S\,=\,\Ext^1_S(\FFF,\,\FFF)$ is the special
case of the above pairing $\PPP$ in \eqref{poi} when $\omega_S$ is trivial. Since $\PPP$ is Poisson, this
2-form is closed and thus it
defines a symplectic structure on $\mathfrak M_S$. Its closedness was also proved, in different ways,
in \cite{Muk-2}, \cite{Ran}, \cite{Hu}. The first two of these papers prove the result only for
locally free sheaves. In fact, Ran works in a more general context: he proves that the trace pairing
\begin{equation}\label{tp}
\Ext^1(\FFF,\,\FFF)\times\Ext^1(\FFF,\,\FFF)\,\xrightarrow{\,\,\cup\,\,}\,\Ext^2(\FFF,\,\FFF)
\,\xrightarrow{\,\,\Tr\,\,}\, H^2(X,\, \OOO_X)\,\longrightarrow\,\CC
\end{equation}
composed with a linear map to $\CC$ (independent of $\FFF$)
defines a closed $H^2(X,\, \OOO_X)$-valued 2-form on the smooth locus of the moduli space $\mathfrak M_X$ of
locally free sheaves for any complex space $X$. The proof of \cite{Hu} handles the case of arbitrary
simple sheaves $\FFF$, but the base variety $X$ is assumed to be smooth. We need this kind of results
for the moduli space of 1-dimensional (and hence non-locally free) sheaves over the above singular surface $Y$.
It should be clarified that this case is not covered in the cited works.

\begin{theorem}
In the above setting, assume that $D$ is semi-ample, and $D\,\neq\, \varnothing$. Let $\mathfrak M_Y$ denote the moduli
space of simple sheaves on $Y$, and let $\mathfrak M^d_Y\,\subset\, \mathfrak M_Y$ be
the locus of invertible sheaves of bidegree $(d,\,d)$ on the reducible curves 
$\Gamma_u\,=\,C_u\cup C'_u$ with $u\,\in\, H_0$. Then the following statements hold:
\begin{enumerate}
\item $H_0$ is smooth of pure dimension $(C_u)^2_S+1$, and
$\JJJ^d$ is smooth over $H_0$ of relative dimension
$(C_u)^2_S+1\,=\,\dim H_0$ such that its fiber
over any $u\,\in\, H_0$ is isomorphic to $J^0(\Gamma_u)$.

\item The subset $\mathfrak M^d_Y$ of $\mathfrak M_Y$ is open
and it is contained in the smooth locus of $\mathfrak M_Y$. The natural map
$\JJJ^d\,\longrightarrow\,\mathfrak M_Y$ (see \eqref{en}) is an open embedding whose image
is $\mathfrak M^d_Y$.

\item $\mathfrak M^d_Y$ carries a symplectic structure $\sigma$ whose restriction to the tangent space at $[\FFF]\,\in\,
\mathfrak M^d_Y$ is given by the pairing \eqref{tp} (with $X\,=\,Y$), composed with an isomorphism
$H^2(Y,\, \OOO_Y)\,\isoto\,\CC$.

\item The support map $\mathfrak M^d\,\longrightarrow\, H_0$, defined by $[\FFF]\,\longmapsto\, \Supp\FFF$, is
a Lagrangian fibration for the symplectic structure $\sigma$ in statement (3).
\end{enumerate}
\end{theorem}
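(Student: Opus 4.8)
The plan is to establish the two defining properties of a Lagrangian fibration separately: that the fibers have exactly half the dimension of $\mathfrak M^d_Y$, and that they are isotropic for $\sigma$. The dimension statement is immediate from (1): the fibers of the support map are the groups $J^0(\Gamma_u)$ of dimension $(C_u)^2_S+1=\dim H_0$, whereas $\mathfrak M^d_Y$ has dimension $2\dim H_0$. Moreover (1) says that $\JJJ^d\to H_0$ is smooth, so the support map is a genuine fibration. Thus it remains only to prove isotropy, after which ``isotropic of half dimension'' yields ``Lagrangian''.

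First I would describe the tangent space to a fiber inside $\Ext^1_Y(\FFF,\FFF)$. Write $\FFF=j_*\LLL$ for the inclusion $j\colon\Gamma=\Gamma_u\into Y$ and a line bundle $\LLL$ of bidegree $(d,d)$; since $\LLL$ is invertible on $\Gamma$ we have $\HOM_Y(\FFF,\FFF)=j_*\OOO_\Gamma$. The local-to-global spectral sequence $E_2^{p,q}=H^p(Y,\EXT^q_Y(\FFF,\FFF))\Rightarrow\Ext^{p+q}_Y(\FFF,\FFF)$, together with $H^2(\Gamma,\OOO_\Gamma)=0$, yields the short exact sequence
$$0\to H^1(\Gamma,\OOO_\Gamma)\to\Ext^1_Y(\FFF,\FFF)\to H^0(Y,\EXT^1_Y(\FFF,\FFF))\to 0.$$
The edge map on the right is identified with the differential of the support morphism, whose target is $T_uH_0=H^0(\Gamma,N_{\Gamma/Y})$; hence the tangent space to the fiber is exactly the bottom piece $F^1\Ext^1_Y(\FFF,\FFF)$, the image of $H^1(\Gamma,\OOO_\Gamma)=H^1(Y,\HOM_Y(\FFF,\FFF))$. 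The count of (1) confirms this, since $h^1(\Gamma,\OOO_\Gamma)=\dim J^0(\Gamma)=\dim H_0$.

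Isotropy is then a formal consequence of the compatibility of the Yoneda product with this filtration. By (3) the pairing is $\sigma(a,b)=\Tr(a\cup b)$ up to the isomorphism $H^2(Y,\OOO_Y)\cong\CC$. For $a,b\in F^1\Ext^1_Y(\FFF,\FFF)$ the cup product lands in $F^2\Ext^2_Y(\FFF,\FFF)$, with induced product on the associated graded given by cup product followed by the composition $\EXT^0\otimes\EXT^0\to\EXT^0$ of local Ext sheaves. But $F^2\Ext^2_Y(\FFF,\FFF)=E_\infty^{2,0}$ is a subquotient of $H^2(Y,\HOM_Y(\FFF,\FFF))=H^2(\Gamma,\OOO_\Gamma)=0$, because $\Gamma$ is one-dimensional. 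Hence $a\cup b=0$ already in $\Ext^2_Y(\FFF,\FFF)$, so a fortiori $\sigma(a,b)=0$; the fibers are isotropic, and the proof is complete.

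The step I expect to be the main obstacle is the rigorous identification of the fiber tangent space with $F^1$ together with the multiplicativity of the spectral sequence, both in the presence of the singularities of $Y$ along $D$ and the nodes of $\Gamma$ at $\Delta=C_u\cap C'_u$. At those points $\Gamma$ is not a Cartier divisor in $Y$, and the clean description $\EXT^1_Y(\FFF,\FFF)=j_*N_{\Gamma/Y}$ valid on the smooth locus requires a local analysis near $\Delta$. The saving feature is that the decisive vanishing $H^2(\Gamma,\OOO_\Gamma)=0$ depends only on $\dim\Gamma=1$ and is insensitive to these singularities, so once the filtration and its multiplicativity are in place the isotropy follows regardless.
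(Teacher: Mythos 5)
Your proposal proves only statement (4) of the theorem: statements (1)--(3) are used as inputs (``the dimension statement is immediate from (1)'', ``by (3) the pairing is $\sigma(a,b)=\Tr(a\cup b)$''), yet they are part of what is to be proved. For (4) itself your argument is correct and is essentially the paper's: the paper extracts from the local-to-global spectral sequence the exact sequence $0\to H^1(\END_{\OOO_Y}(\FFF))\to\Ext^1_Y(\FFF,\FFF)\to H^0(\EXT^1_{\OOO_Y}(\FFF,\FFF))\to 0$, identifies $H^1(\END_{\OOO_Y}(\FFF))\simeq H^1(\OOO_{\Gamma_u})$ with the tangent space to the fiber $J^d(\Gamma_u)$, and observes that the restricted Yoneda pairing factors through $H^2(\EXT^0(\FFF,\FFF))=0$ since the support is $1$-dimensional; your formulation via the multiplicative filtration, $F^1\cup F^1\subset F^2\Ext^2_Y(\FFF,\FFF)=E_\infty^{2,0}=0$, is the same vanishing, and the conclusion ``isotropic of half dimension, hence Lagrangian'' matches the paper. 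One point you flag as a possible obstacle is in fact not one: $\Gamma_u$ \emph{is} a Cartier divisor in $Y$ (near a node $Y$ is locally $\{uv=0\}$ in $\mathbb A^3$ and $\Gamma_u$ is cut out by a single ambient equation), and the paper uses exactly this to justify $\EXT^1_{\OOO_Y}(\FFF,\FFF)\simeq j_{u*}\NNN_{\Gamma_u/Y}\simeq j_{u*}\omega_{\Gamma_u}$ and the identification of the edge map with the differential of the support morphism.

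The genuine gap is everything else. Statement (1) requires the normal bundle sequence $0\to\NNN_{C_u/S}(-\Delta_u)\to\NNN_{\Gamma_u/Y}\to\NNN_{C'_u/S'}\to 0$ together with $\NNN_{C_u/S}(-\Delta_u)\simeq\omega_{C_u}$ and Severi semiregularity (using $|K_S|=\varnothing$) to lift infinitesimal deformations; statement (2) requires the dimension count $\dim\Ext^1_Y(\FFF,\FFF)=2\dim H_0$ to get smoothness of $\mathfrak M_Y$ along $\mathfrak M^d_Y$. Most seriously, statement (3) --- the closedness of $\sigma$ --- is the substantive core of the proof and is not covered by the literature you could cite, since Mukai, Ran and Huybrechts--Lehn assume either locally free sheaves or a smooth ambient variety, while here $Y$ is a singular normal crossing surface and the sheaves are torsion. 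The paper handles this by a deformation trick: realize $Y$ as the divisor $y_1^2-y_0^2\pi^*F^2$ in $\PP(\OOO_S\oplus\omega_S)$, smooth it in a pencil $Y_t$ (with $Y_t$ a K3 or abelian surface for $t\neq 0$), check that the sheaves of $\mathfrak M^d_Y$ are stable so that $\mathfrak M^d_Y$ sits in a relative moduli space over the base of the pencil, invoke B\u{a}nic\u{a}--Putinar--Schumacher to glue the fiberwise trace pairings into a relative holomorphic $2$-form, deduce closedness on the smooth fibers from the classical case, and conclude at $t=0$ by continuity. Without an argument of this kind (or a direct extension of the Huybrechts--Lehn closedness proof to simple normal crossing surfaces), your proposal establishes the Lagrangian property of a $2$-form that has not been shown to be symplectic, and so does not prove the theorem.
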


\begin{proof}
\textbf{(1).}\, The Picard functor is smooth when the characteristic
of the base field is zero, so we conclude that $\JJJ^d$ is smooth over $H_0$. To
see that $H_0$
is smooth over $\CC$, consider the short exact sequence of normal bundles
$$
0\,\longrightarrow\, \NNN_{C_u/S}(-\Delta_u)\,\longrightarrow\, \NNN_{\Gamma_u/Y}
\,\longrightarrow\, \NNN_{C'_u/S'}\,\longrightarrow\, 0,
$$
where $\Delta_u\,=\,C_u\cap C'_u$. Since we have $\NNN_{C_u/S}(-\Delta_u)\,\simeq\,\omega_{C_u}$ and
$\NNN_{\Gamma_u/Y}\,\simeq\, \omega_{\Gamma_u}$, it follows that
$$
h^0(\NNN_{\Gamma_u/Y})\,=\,h^0(\omega_{C_u})\, +\, h^0(\NNN_{C'_u/S'})\,=\,(C_u)^2_S+1.
$$
The subspace $H^0(\NNN_{C_u/S}(-\Delta_u))$ of $H^0(\NNN_{C_u/S})$ represents the infinitesimal deformations
of $C_u$ lying in the subscheme $\Hilb_S(-\Delta_u)$ of $\Hilb_S$ which parametrizes the curves $C$ on $S$
for which $\Delta_u\,\subset\, C$. By our assumption, $|K_S|\,=\,|-D|_S\,=\,\varnothing$, so by Severi's
semiregularity, all the infinitesimal deformations of $C_u$ in $S$ are in fact unobstructed. Hence all the elements
of $H^0(\NNN_{C_u/S}(-\Delta_u))$ lift to actual deformations of $C_u$ inside $\Hilb_S(-\Delta_u)$. By the
same reasoning, all the elements of $H^0(\NNN_{C'_u/S'})$ lift to deformations of $C'_u$ in $S'$,
so 
$$\dim H_0\,=\,h^0(\NNN_{C_u/S}(-\Delta_u))+h^0(\NNN_{C'_u/S'})\,=\,h^0(\NNN_{\Gamma_u/Y}),
$$ 
and all the infinitesimal deformations of $\Gamma_u$ in $Y$ are unobstructed. This implies that $H_0$
is a smooth equidimensional open subset of the Hilbert scheme of $Y$ of dimension $(C_u)^2_S+1$. Thus
$\JJJ^d$ is smooth over $\CC$, and $\dim\JJJ^d\,=\,2\dim H_0$.

\textbf{(2).}\, The existence of a classifying morphism $\JJJ^d\,\longrightarrow\,
\mathfrak M_Y$ follows from the fact that the Picard scheme represents, while $\mathfrak M_Y$ corepresents, the
respective moduli functors. Its image coincides with ${\mathfrak M}^d_Y$ by definition, and it is evidently a
bijection between $\JJJ^d$ and ${\mathfrak M}_Y^d$.
To see that the classifying morphism $\JJJ^d\,\longrightarrow\, \mathfrak M_Y$
is open and is an isomorphism onto the image, it suffices to verify that $\mathfrak M_Y$ is
smooth along $\mathfrak M_Y^d$, and this is equivalent to the statement that
\begin{equation}\label{ltg0}
\dim\Ext^1_Y(\FFF,\,\FFF)\,\,=\,\,2\dim H_0
\end{equation}
for all $[\FFF]\,\in\, \mathfrak M_Y^d$.

{}From the local-to-global spectral sequence we deduce the short exact sequence
\begin{equation}\label{ltg}
0\,\longrightarrow\, H^1(\END_{\OOO_Y}(\FFF))\,\longrightarrow\, \Ext^1_Y(\FFF,\,\FFF)
\,\longrightarrow\, H^0(\EXT^1_{\OOO_Y}(\FFF,\,\FFF))\,\longrightarrow\, 0.
\end{equation}
Here $\FFF\,=\,j_{u*}\LLL$ for a line bundle $\LLL$ on $\Gamma_u$, so $\END_{\OOO_Y}(\FFF)\,\simeq\,
j_{u*}\OOO_{\Gamma_u}$ and $$\EXT^1_{\OOO_Y}(\FFF,\,\FFF))\,\,\simeq\,\, j_{u*}\NNN_{\Gamma_u/Y}
\,\,\simeq \,\,j_{u*}\omega_{\Gamma_u},$$ hence we have
$$\dim \Ext^1_Y(\FFF,\,\FFF)\,=\,h^1(\OOO_{\Gamma_u})+h^0(\omega_{\Gamma_u})\,=\,2\dim J(\Gamma_u)
\,=\,2\dim H_0.$$ This proves \eqref{ltg0}.

\textbf{(3).}\, As in \cite{Muk-1}, the non-degeneracy of the pairing as in
\eqref{tp} follows immediately from its identification with 
the Serre duality pairing on $Y$. The closedness of the 2-form can be proven by extending the approach of 
\cite[Chapter 10]{Hu} to the class of simple normal crossing varieties, to which $Y$ belongs, but it would 
be fastidious to spell out all the details of such a generalization. We can circumvent this complication
by a use of the following deformation trick.

We will construct by hand a 1-parameter smoothing of $Y$. To this end, 
consider the 3-fold scroll $\PP\,=\,\PP(\EEE)$ obtained by the projectivization of the rank two vector bundle 
$\EEE\,=\,\OOO_S\oplus\omega_S$. Let $$\pi\,\,:\,\,\PP\,\, \longrightarrow\,\, S$$
be the natural projection. Let $\OOO(1)\,=\,\OOO_{\PP/S}(1)$ 
be the tautological sheaf, and let $y_0\,\in\, H^0(\PP,\,\OOO(1))$ and $y_1\,\in\, H^0(\PP,
\,\OOO(1)\otimes\pi^*\omega_S^{-1})$ be
the sections associated to the embeddings of the two factors of the direct sum into $\EEE$; let $F\,\in\, 
H^0(S,\,\omega_S^{-1})$ be a nonzero section vanishing on $D$. Then $Y$ can be realized as the zero locus 
${\rm Div}_{\PP}(s)$ of a section $s$ of a line bundle on $\PP$:
$$
Y\,=\,{\rm Div}_{\PP}(s),\ \ s\,\in\, H^0(\PP,\,\OOO(2)\otimes\omega_S^{-2}),\ \ s\,=\,y_1^2-y_0^2\pi^*F^2.
$$
Choosing a generic section $G\,\in\, H^0(S,\,\omega_S^{-2})$, we obtain the following 1-parameter family
$\mathcal Y\,=\,\{Y_t\}_{t\in \AA^1}$ smoothing out the singularities of $Y\,=\,Y_0$:
$$
Y_t\,=\,{\rm Div}_{\PP}(s_t),\ \ s_t\,\in\, H^0(\PP,\,\OOO(2)\otimes\omega_S^{-2}),\ \ s_t\,
=\,y_1^2-y_0^2\pi^*(F^2+tG).
$$
Let $B$ be an open neighborhood of $0$ in $\AA^1$ such that $Y_t$ is smooth for all $$t\,\in\, B^*
\,:=\,B\setminus\{0\}.$$
Then $Y_t$ is a smooth projective surface with trivial canonical bundle, and hence
it is either an abelian surface or it is a K3 surface for each $t\,\in\, B^*$. Let $p\,:\,\YYY_B\, \longrightarrow\, B$
be the restriction of $\YYY$ over $B$. Choose some ample class $h$ on $\PP$; it defines a relative
polarization on $\YYY_B/B$.
Consider the relative moduli space of stable sheaves $M^s_{\YYY_B/B}(P)$, as constructed in
\cite[Theorem 4.3.7]{Hu}, where $P$ is the Hilbert polynomial of the sheaves from $\mathfrak M_Y^d$ with respect to
the ample class $h$. Let $\FFF\,=\,j_{u*}\LLL$ be a sheaf from $\mathfrak M_Y^d$. Then the two biggest proper
subsheaves of $j_{u*}\LLL\,=\, \FFF$ are $j_{u*}(\LLL\big\vert_{C_u})(-\Delta_u)$ and $j_{u*}(\LLL\big\vert_{C'_u})(-\Delta_u)$,
and none of them is destabilizing. So we conclude that $\FFF$ is actually stable.

Thus $\mathfrak M_Y^d$ is an open subset in the fiber of $M^s_{\YYY_B/B}(P)$ over $0\,\in\, B$.
It is the relative Jacobian of the complete family of reducible curves $\Gamma_u$ on $Y_0$, and it
deforms to the neighboring fibers as the stable part of the relative Jacobian of a complete family
of curves on $Y_t$; we denote by $\mathfrak M_{Y_t}^d$ this
stable part of the relative Jacobian of the complete family of curves
on $Y_t$. As $t$ varies, these moduli spaces fill the open subset
$\mathfrak M_{\YYY_B/B}^d\,\subset\, M^s_{\YYY_B/B}(P)$.
The fact that the dimension of the fibers coincides with the dimension of $\Ext^1_{Y_t}(\FFF,\,\FFF)$
actually implies the smoothness of the structure morphism $\mathfrak M_{\YYY_B/B}^d
\, \longrightarrow\, B$. By \cite{BPS}, the constancy of $\dim \Ext^1_{Y_t}(\FFF,\,\FFF)$, and the flatness
of $\mathfrak M_{\YYY_B/B}^d/ B$, together imply that
the spaces $\Ext^1_{Y_t}(\FFF,\,\FFF)$ are the fibers of the relative Ext-sheaf
$\EXT^1_{\mathfrak M_{\YYY/B}^d/ B}(\bFFF,\,\bFFF)$, which can be identified with the relative tangent bundle
$\TTT_{\mathfrak M_{\YYY/B}^d/ B}$. Here $\bFFF$ is a universal sheaf, which exists locally in the analytic
or \'etale topology of $B$, but the relative Ext-sheaf does not depend on the choice of $\bFFF$, and it is
globally defined. Thus the 2-forms $\sigma_{t}$ on $\mathfrak M_{Y_t}^d$ glue together into a relative
holomorphic 2-form $\boldsymbol\sigma$
on $\mathfrak M_{\YYY/B}^d/ B$, defined by the relative version of the pairing \eqref{tp}.
This 2-form is closed
when restricted to $\mathfrak M_{Y_t}^d$ for any $t\,\in\, B^*$ by \cite[Chapter 10]{Hu}, and
consequently $\sigma\, :=\,\boldsymbol\sigma\big\vert_{\mathfrak M^d_{Y_0}}$ is also closed by continuity.
 
\textbf{(4).}\, This in fact follows from \eqref{ltg}. The subspace $$H^1(\END_{\OOO_Y}(\FFF))
\,\,\simeq\,\, H^1(\OOO_Y)$$ of $T_{[\FFF]}\mathfrak M^d_Y\,=\,
\Ext^1_Y(\FFF,\,\FFF)$ is nothing else but the tangent space to the fiber $J^d(\Gamma_u)$ of
$\mathfrak M^d_Y$ over $u$. Also, the Yoneda pairing restricts to it, up to the sign, as the composition
of maps $\EXT^0(\FFF,\,\FFF)
\times\EXT^0(\FFF,\,\FFF)\, \longrightarrow\,\EXT^0(\FFF,\,\FFF)$ combined with the cup product
$H^1(\EXT^0(\FFF,\,\FFF))\times H^1(\EXT^0(\FFF,\,\FFF))\, \longrightarrow\,
H^2(\EXT^0(\FFF,\,\FFF))$; but this combined map is 
zero because the support of $\EXT^0(\FFF,\,\FFF)$ is 1-dimensional, so $H^2(\EXT^0(\FFF,\,\FFF))
\,=\,0$. Thus the entire fiber $J^d(\Gamma_u)$ is totally isotropic
in $\mathfrak M^d_Y$ for the symplectic structure $\sigma$. Now the entire fiber $J^d(\Gamma_u)$ is
Lagrangian because its dimension coincides with the half of the dimension of $\mathfrak M^d_Y$.
\end{proof}

Now we are going to construct the relative Prym variety $$\PPPb\,\,=\,\,\Prym(\JJJ^0/H_0,\,\iota).$$
Consider the following three involutions on $\JJJ^0$:
\begin{enumerate}
\item $\iota^*$:\, Where $\iota\,:\,Y\, \longrightarrow\, Y$ is, as before, the involution that
permutes the two components of $Y$.

\item $\epsilon$:\, The inversion that sends a sheaf of the form $j_{u*}\LLL$ 
to $j_{u*}(\LLL^{-1})$, for each line bundle $\LLL$ of bidegree $(0,\,0)$ on the curve $\Gamma_u$ with $u\,\in\, 
H_0$.

\item The composition of the above maps $\tau\,=\,\iota^*\circ \epsilon\,=\,\epsilon\circ \iota^*$.
\end{enumerate}
The fact that $\epsilon$ 
is a well-defined automorphism of the moduli space is proved as done in \cite{MT}: One observes that $\epsilon$ 
can be given by another formula: $$\epsilon\,\,:\,\,\FFF\,\,\longmapsto\,\, \EXT^1_{\OOO_Y}(\FFF,\, \HHH^{-1}),\ \
\, \text{where\ }\ \, \HHH\,\simeq\,\OOO_Y(\Gamma_t),$$ and this definition behaves well in families, meaning
it commutes with the operation of
base change. Let $\JJJ^{0\tau}\,\subset\, \JJJ^0$ denote the fixed point locus for
the involution $\tau$. It evidently projects to the locus of symmetric curves $\Gamma_u$ in $Y$; these are the
curves of the form $C\cup \rho(C)$, where $C$ runs over the open part 
$V$ of the Hilbert scheme of curves $\Hilb_S$ in $S$ parametrizing the smooth curves transversal to $D$ 
which lie in the given semiample integer cohomology class. We thus have a natural projection $\JJJ^{0\tau}
\, \longrightarrow\, V$. Now define $\PPPb$ as the connected component of $\JJJ^{0\tau}$ containing
the zero section of this projection $\JJJ^0\, \longrightarrow\, V$. 
It comes endowed with a natural map $\mu\,:\,\PPPb\, \longrightarrow\, V$ that sends each sheaf
$\FFF\,\in\, \PPPb$ to the curve $C\,\in\, V$ such that $\Supp\FFF\,=\,C\cup\rho(C)$.

To describe the sheaves in $\PPPb$, introduce some notation for gluing line bundles on $C$ and $C'\,=\,\rho(C)$.
Let $\MMM$ (respectively, $\MMM'$) be a line bundle on $C$ (respectively, $C'$). Let $\Delta
\,=\,C\cap C'\,=\,\{x_1,\,\cdots,\,x_\delta\}$, and fix some generators
$\boldsymbol e$ and $\boldsymbol e'$ of $\MMM\big\vert_{\Delta}$ and $\MMM'\big\vert_{\Delta}$ respectively.
One can think of $\boldsymbol e$ and $\boldsymbol e'$ as collections $\boldsymbol e
\,=\,(e_1,\,\cdots ,\,e_\delta)$ and $\boldsymbol e'\,=\,(e'_1,\,\cdots ,\, e'_\delta)$, where $e_i$
(respectively, $e'_i$) is a generator for the fiber $\MMM\big\vert_{x_i}$ (respectively, $\MMM'\big\vert_{x_i}$)
at $x_i$, $i\,=\,1,\,\cdots,\,\delta$. Then for any
$$\boldsymbol\lambda\,\,=\,\,(\lambda_1,\,\cdots,\,\lambda_\delta)\,\,\in\,\,(\CC^*)^\delta ,$$
we can describe the result
of gluing of $\MMM$ and $\MMM'$ --- at the points of $\Delta$ according to the gluing data
$(\boldsymbol e,\,\boldsymbol e',\,\boldsymbol \lambda)$ --- as the kernel sheaf in the following
short exact sequence:
$$
0\,\longrightarrow\, \MMM\#_{(\boldsymbol e,\boldsymbol e',\boldsymbol \lambda)}{\MMM'}
\,\longrightarrow\, \MMM\oplus\MMM' \,\xrightarrow{\,\,\,\kappa\,\,}\,\bigoplus_{i=1}^\delta\CC(x_i)
\,\longrightarrow\, 0,
$$
where $\kappa\,=\,\bigoplus_{i=1}^\delta\kappa_i$; for each $i\, \in\,
\{1,\, cdots,\, \delta\}$ the above map $\kappa_i$ is the composition of maps
$$
\MMM\oplus\MMM'\,\xrightarrow{\,\,\,\ev_{x_i}\,\,}\,
\MMM\big\vert_{x_i}\oplus\MMM'\big\vert_{x_i}\,\xrightarrow{\,\,\,(e^*_i,\,\lambda_i{e'_i}^*)\,\,} \,\CC(x_i),
$$
and $e^*_i$ (respectively, ${e'_i}^*$) denotes the linear form on $\MMM\big\vert_{x_i}$
(respectively, $\MMM'\big\vert_{x_i}$) dual to $e_i$ (respectively, ${e'_i}$). With this notation,
for every $\nu\,\in\,\CC^*$, we have
\begin{equation}\label{a1}
\MMM\#_{(\boldsymbol e,\boldsymbol e',\nu\boldsymbol \lambda)}{\MMM'}
\,\simeq\,\MMM\#_{(\boldsymbol e,\boldsymbol e',\boldsymbol \lambda)}{\MMM'},\ \,
\big(\MMM\#_{(\boldsymbol e,\boldsymbol e',\boldsymbol \lambda)}{\MMM'}\big)^{-1}\, \simeq
\, \MMM^{-1}\#_{(\boldsymbol e^*,\boldsymbol {e'}^*,\boldsymbol \lambda^{-1})}{\MMM'}^{-1},
\end{equation}
where $\nu\boldsymbol \lambda\,=\,(\nu\lambda_1,\,\cdots,\,\nu\lambda_\delta)$,
$\boldsymbol e^*\,=\,(e^*_1,\,\cdots ,\, e^*_\delta)$
and $\boldsymbol \lambda^{-1}\,=\,(\lambda_1^{-1},\,\cdots,\,\lambda_\delta^{-1})$. The first relation
in \eqref{a1} shows that $\boldsymbol\lambda$ can be considered modulo the simultaneous
multiplication by $\CC^*$ on the factors, in other words,
$\boldsymbol\lambda$ is an element of $(\CC^*)^\delta/\CC^*\,\simeq\,(\CC^*)^{\delta-1}$.

\begin{theorem} \label{pr-th}
Let $\PPPb\,=\,\Prym(\JJJ^0/H_0,\,\iota)$ be the relative Prym variety constructed above.
The following statements hold:
\begin{enumerate}
\item The fiber $\PPPb_v\,=\,\mu^{-1}(v)$ over any point $v\,\in\, V$ representing a curve $C\,=\,C_v$
is described as follows. Let $C'\,=\,\rho(C)$,\, $\rho_C\,=\,\rho\big\vert_C\,:\,C\,\isoto\, C'$, and associate
to any invertible sheaf $\MMM$ on $C$ the sheaf $^\rho\!\MMM\,:=\,(\rho_C^{-1})^*\MMM$ on $C'$. Then
the fibers $\MMM\big\vert_{x_i}$ and $^\rho\!\MMM\big\vert_{x_i}$ are naturally identified, so the choice of a basis
$\{e_i\}_{i=1}^\delta$ of $\MMM\big\vert_{x_i}$ determines a basis of $^\rho\!\MMM\big\vert_{x_i}$, which is denoted
by the same symbol $\{e_i\}_{i=1}^\delta$. With this notation, 
$$
\PPPb_v\,\,=\,\,\left\{\left[\MMM\#_{(\boldsymbol e,\boldsymbol e^*,
\boldsymbol \lambda)}{\big(^\rho\!\MMM^{-1}\big)}\right]
\ \big\vert \ \boldsymbol e\ \mbox{is a generator of}\ \MMM\big\vert_{\Delta},\
\boldsymbol\lambda\,\in\, (\CC^*)^\delta/\CC^*\right\}.
$$
It is a connected abelian subscheme of $J^0(C\cup C')$ of dimension $g+\delta-1$ that fits in the
short exact sequence
\begin{equation}\label{extP}
1\,\longrightarrow\, (\CC^*)^\delta/\CC^*\,\xrightarrow{\,\,\,i_C\,\,}\, \PPPb_v\,\xrightarrow{\,\,\,r_C\,\,}
\, J^0(C)\,\longrightarrow\, 0,
\end{equation}
in which the maps are given by
$$
\boldsymbol \lambda\,\mapstoo{i_C}\, \OOO_C\#_{(\boldsymbol 1,\boldsymbol 1,
\boldsymbol \lambda)}\OOO_{C'},\ \ \ 
\LLL\, \mapstoo{r_C} \,\LLL\big\vert_{C},
$$
where $\boldsymbol 1\,=\,(1,\,\cdots,\,1)$ ($\delta$ times).

\item $\PPPb\,=\,\Prym(\JJJ^0/H_0,\,\iota)$ is a smooth symplectic variety of dimension $2(g+\delta-1)$, whose
symplectic structure $\sigma_{\PPPb}$ is the restriction of $\sigma$, which is the symplectic structure on
$\JJJ^0\,=\,\mathfrak M^0_Y$. Moreover, the natural map
$\mu\,:\,\PPPb\,\longrightarrow\, V$ is a Lagrangian fibration.

\item Let $\CCC/V$ denote the universal family of curves on $S$ parametrized by $V$. Then the relative 
Jacobian $\JJJ(\CCC/V)$ is naturally identified with an open part of the moduli space $\mathfrak M_S$ of simple 
sheaves on $S$, and hence $\mathfrak M_S$ carries the Poisson structure $\PPP$ defined by \eqref{poi}. There
is a natural projection $\beta\,:\,\PPPb\,\longrightarrow\, \JJJ(\CCC/V)$ which sends a line bundle $\LLL$ on
a curve $C_v\cup C'_v$ to its restriction $\LLL\big\vert_{C_v}$. This projection is a principal $(\CC^*)^{\delta-
1}$--bundle, and the projection is also a map 
of Poisson manifolds, representing $(\JJJ(\CCC/V),\,\PPP)$ as a reduction of the nondegenerate Poisson 
structure $(\PPPb,\,\sigma_{\PPPb}^{-1})$.
\end{enumerate}
\end{theorem}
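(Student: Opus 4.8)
The plan is to read part (3) as a statement about the free symplectic action of the torus $(\CC^*)^{\delta-1}$ on the symplectic variety $\PPPb$ furnished by part (2), and then to identify the resulting reduced Poisson structure with $\PPP$ by a Mayer--Vietoris comparison of the trace pairing on $Y=S\cup_D S'$ with the pairing \eqref{poi} on $S$.

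First I would record the identification of $\JJJ(\CCC/V)$ with an open part of $\mathfrak M_S$. Pushing a line bundle $\MMM$ on a curve $C=C_v$ forward by $j_C\colon C\into S$ yields a simple $1$-dimensional sheaf $\GGG=j_{C*}\MMM$, and, exactly as in part (2) of the first theorem but with $S$ in place of $Y$, the classifying morphism $\JJJ(\CCC/V)\to\mathfrak M_S$ is bijective onto its image; the local-to-global sequence gives $\dim\Ext^1_S(\GGG,\GGG)=h^1(\OOO_C)+h^0(\NNN_{C/S})=\dim\JJJ(\CCC/V)$, so $\mathfrak M_S$ is smooth there and $\JJJ(\CCC/V)$ inherits the Poisson structure $\PPP$ of \eqref{poi}. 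Next I would set up $\beta$: restriction to $C$ is the relative version of the map $r_C$ of \eqref{extP}, so over $v\in V$ the fibre of $\beta$ is $\ker r_C$, the torus $(\CC^*)^\delta/\CC^*\cong(\CC^*)^{\delta-1}$ of gluing data $\boldsymbol\lambda$. This torus (more precisely the relative torus over $V$ with fibre $(\CC^*)^{\Delta_v}/\CC^*$, a form of $(\CC^*)^{\delta-1}$) acts on $\PPPb$ by rescaling the gluings through $i_C$; by the first relation in \eqref{a1} the action is free, and by \eqref{extP} it is transitive on the fibres of $\beta$, whence $\beta$ is a principal $(\CC^*)^{\delta-1}$-bundle.

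The heart of the proof is the Poisson compatibility. The torus action preserves $\sigma_{\PPPb}$, and its orbits, being the fibres of $\beta$, lie inside the subspace $H^1(\END_{\OOO_Y}(\FFF))\simeq H^1(\OOO_{\Gamma_u})\subset\Ext^1_Y(\FFF,\FFF)$ of \eqref{ltg}, which is Lagrangian by part (4) of the first theorem; hence the fibres of $\beta$ are isotropic of dimension $\delta-1$, and $\beta$ is an isotropic realization. To compute the reduced structure it then suffices to evaluate the pushforward $\beta_*\sigma_{\PPPb}^{-1}$ pointwise and match it with $\PPP$, after which no separate Jacobi check is needed since $\PPP$ is already Poisson. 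Unwinding definitions, the codifferential $(d\beta)^*$ identifies $T^*_{[\GGG]}\JJJ(\CCC/V)=\Ext^1_S(\GGG,\,\GGG\otimes\omega_S)$ (Serre duality on $S$) with the $\sigma_{\PPPb}$-orthogonal complement of the torus directions inside $T_{[\FFF]}\PPPb$; tracing through $\sigma_{\PPPb}^{-1}$, the desired identity $\beta_*\sigma_{\PPPb}^{-1}=c\,\PPP$ becomes the pointwise statement $d\beta(X_\eta)=c\,\gamma_*\eta$ in $\Ext^1_S(\GGG,\GGG)$, where $\gamma_*$ is induced by the multiplication $\otimes\gamma\colon\omega_S\to\OOO_S$, that is by $\OOO_S(-D)\into\OOO_S$, and $X_\eta\in T_{[\FFF]}\PPPb$ is the $\sigma_{\PPPb}$-dual of $(d\beta)^*\eta$. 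I would establish this by a Mayer--Vietoris analysis of $\Ext^\bullet_Y(\FFF,\FFF)$ over $Y=S\cup_D S'$: using $\omega_Y\simeq\OOO_Y$ and the adjunction $\omega_Y\big\vert_S\simeq\omega_S(D)\simeq\OOO_S$, a class $\eta\in\Ext^1_S(\GGG,\GGG\otimes\omega_S)$ extends, through the gluing along $D$ where $\gamma$ vanishes, to a class on $Y$, and the trace pairing $\sigma$ on $Y$ restricts to the Serre pairing on $S$ twisted by precisely this $\otimes\gamma$; matching the trace isomorphisms $H^2(Y,\OOO_Y)\isoto\CC$ and $H^2(S,\omega_S)\isoto\CC$ pins down the universal constant $c$.

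The main obstacle is exactly this last comparison: one must verify that the doubling geometry intertwines Serre duality and the trace on the singular surface $Y$ with those on $S$, so that the vanishing of $\gamma$ along $D=S\cap S'$ reproduces the map $\otimes\gamma$ of \eqref{poi}. The delicate bookkeeping lies in keeping track of the torus (null) directions of $\sigma_{\PPPb}^{-1}$ and of the sign that $\iota^*$ introduces on the gluing part of $H^1(\OOO_{\Gamma_u})$ coming from the Mayer--Vietoris connecting map $H^0(\OOO_{\Delta_u})\to H^1(\OOO_{\Gamma_u})$; it is this sign that places the torus Lie algebra in the $(+1)$-eigenspace of $d\tau$, consistently with $\PPPb$ containing the gluing torus, and that must be reconciled with the $\otimes\gamma$ twist to yield the final identity.
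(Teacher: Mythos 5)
For part (3) — the only part you actually argue — your route is essentially the paper's own, in symplectic-reduction packaging. The paper computes the differential of the restriction map $r_C$ as a map $\beta_*\colon\Ext^1_Y(\LLL,\LLL)\to\Ext^1_S(\LLL_C,\LLL_C)$ on hypercohomology of Hom-complexes of locally free resolutions, dualizes via Serre duality using exactly your two ingredients ($\omega_Y\simeq\OOO_Y$ and $\omega_S\simeq\OOO_S(-D)$) to obtain the codifferential $\Ext^1_S(\LLL_C,\LLL_C(-D))\to\Ext^1_Y(\LLL,\LLL)$ as a map of extensions (diagram \eqref{el2}), and then invokes precisely the observation at the heart of your sketch: the $H^1(\EXT^0)$ parts are Yoneda-isotropic, so both Poisson pairings are determined by the pairing between $H^1(\EXT^0)$ and $H^0(\EXT^1)$, which the vertical maps respect, with the contraction by $\gamma\in H^0(S,\OOO_S(D))$ appearing on the $S$-side — your ``Serre pairing twisted by $\otimes\gamma$.'' Your preliminary steps (the dimension count $\dim\Ext^1_S(\GGG,\GGG)=h^1(\OOO_C)+h^0(\NNN_{C/S})$ identifying $\JJJ(\CCC/V)$ inside the smooth locus of $\mathfrak M_S$; isotropy of the $\beta$-fibres because they sit inside the Lagrangian $H^1(\OOO_{\Gamma_u})$-direction of \eqref{ltg}; reduction of $\beta_*\sigma_{\PPPb}^{-1}=c\,\PPP$ to the pointwise identity $d\beta(X_\eta)=c\,\gamma_*\eta$) are all sound and consistent with the paper, which is itself no more detailed at the delicate comparison you flag — it calls the verification ``straightforward'' after the same structural remark.

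The genuine gap is that the theorem has three parts and you prove one of them, using the other two as inputs: you start from ``the symplectic variety $\PPPb$ furnished by part (2)'' and get transitivity of the torus on the $\beta$-fibres ``by \eqref{extP},'' which is the content of part (1). Neither is automatic. Part (1) requires showing that $\LLL\in\JJJ^{0\tau}$ forces $\rho(C_u)=C'_u$ and $\LLL\big\vert_{C_u}\simeq\big(\rho^*\LLL\big\vert_{C'_u}\big)^{-1}$, whence \eqref{extP} follows by restricting \eqref{genJ} to the $\tau$-fixed locus. Part (2) requires the nontrivial facts that $\iota^*$ and $\epsilon$ are each \emph{anti}symplectic, so that $\tau=\iota^*\circ\epsilon$ is symplectic; that $\epsilon$ is even a well-defined automorphism of the moduli space (the paper justifies this via the family-compatible formula $\epsilon(\FFF)=\EXT^1_{\OOO_Y}(\FFF,\HHH^{-1})$ with $\HHH\simeq\OOO_Y(\Gamma_t)$); that the fixed locus of a symplectic involution is a smooth symplectic subvariety (the $+1$-eigenspace of a symplectic involution of a symplectic vector space is symplectic); and that $\mu$ is Lagrangian, which follows from isotropy of its fibres inside the fibres of the support map plus the count $\dim\PPPb=2\dim V$. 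Your closing remark about the torus Lie algebra lying in the $+1$-eigenspace of $d\tau$ shows you see the relevant mechanism, but none of this is carried out — and without parts (1) and (2) your part (3) argument has no symplectic form $\sigma_{\PPPb}$ to reduce and no sequence \eqref{extP} to invoke, so as written the proposal is an incomplete proof of the stated theorem.
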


\begin{proof}
\textbf{(1).}\, The exact sequence \eqref{genJ} expresses the fact that every sheaf $\LLL$ from $\JJJ^0$ can 
be obtained as a result of gluing $(\LLL\big\vert_{C_u})\#_{(\boldsymbol e,\boldsymbol e',\boldsymbol 
\lambda)}(\LLL\big\vert_{C'_u})$, and the action of $(\CC^*)^\delta$ on $\JJJ^0$ by dilations of $\boldsymbol 
\lambda$ has the diagonal scalar dilations $\CC^*\, \subset\, (\CC^*)^\delta$ as the kernel. In view of the 
fact that the condition that $\LLL\,\in\,\JJJ^{0\tau}$ is equivalent to the conditions
that $\rho(C_u)\,=\,C'_u$ and $\LLL\big\vert_{C_u}\,\simeq\,
(\rho^*\LLL\big\vert_{C'_u})^{-1}$, the restriction of \eqref{genJ} to the fixed point locus of 
$\tau$ yields \eqref{extP}.

\textbf{(2).}\, One easily verifies that both the involutions $\iota^*$ and $\epsilon$ are
antisymplectic, meaning $\epsilon^*(\sigma) \,=\, (\iota^*)^*(\sigma)\,=\,-\sigma$, and hence
their composition $\tau$ is actually symplectic.

It is a standard fact that every connected component of the fixed locus of a regular involution on
a smooth variety is a smooth subvariety whose tangent spaces are the $+1$--eigenspaces of the
involution acting on the tangent spaces at the fixed points. It is also a straightforward exercise
to see that the $+1$--eigenspace of a symplectic involution on a symplectic vector space
is a symplectic subspace. Thus $\PPPb$ is a smooth symplectic subvariety of $\JJJ^0$.

The computation of the dimension of $\PPPb$ follows from part (1). The fibers of $\mu$
are contained in those of the projection $\JJJ^0\,\longrightarrow\, H_0$, and this
projection $\JJJ^0\,\longrightarrow\, H_0$ is Lagrangian. Therefore the fibers of $\mu$ are
indeed totally isotropic. As $\dim \PPPb\,=\,2\dim V$, the fibers of $\mu$ must be Lagrangian.

\textbf{(3).}\, Let $\FFF\,=\,j_{\Gamma*}\LLL$ --- where $\LLL$ is a line bundle on $\Gamma$ --- be a
point of $\PPPb$. Denote $\LLL_C\,:=\,\LLL\big\vert_C$ and $\LLL_{C'}\,:=\,\LLL\big\vert_{C'}\,\simeq\, (\rho^{-1})^*
\LLL_C^{-1}$. To simplify the notation, we will write simply $\LLL$ in place of $\FFF$ and omit
$j_{\Gamma*}$ when applied to a sheaf supported on (a part of) $\Gamma$.

We will represent the elements of $\Ext^1_Y(\LLL,\,\LLL)$ as classes in the hypercohomology group
$\HH^1(K^\bdot)$, where $K^\bdot\,=\,K_Y^\bdot$ is the simple complex of locally free sheaves
associated to the double complex $\HOM_{\OOO_Y}(L^\bdot,\,L^\bdot)$, and $L^\bdot$
is a locally free resolution of $\LLL$ as an $\OOO_Y$--module, which can be chosen to be of
length 1. Since $\Gamma$ is a Cartier curve and the local equations of $C$ at the singular points
of $\Gamma$ are just the restrictions of local equations of $\Gamma$ to $S$, the restriction $L_S^\bdot
\,=\,L^\bdot\otimes_{\OOO_Y}\OOO_S$ is a locally free resolution of $\LLL_C$. Thus the restriction map $\beta
\,:\,K^\bdot\, \longrightarrow\,K_S^\bdot$ of simple complexes --- associated to the double complexes
$\HOM_{\OOO_Y}(L^\bdot,\,L^\bdot)$ and $\HOM_{\OOO_S}(L_S^\bdot,\,L_S^\bdot)$ ---
provides the following map of the hypercohomologies:
$$
\beta_*\,:\,\Ext^1_Y(\LLL,\,\LLL)\,=\,\HH^1(K^\bdot)\, \longrightarrow\, \Ext^1_S(\LLL_C,\,\LLL_C)
\,=\,\HH^1(K_S^\bdot),
$$
which is nothing else but $\ud_{[\LLL]}r_C$, the differential of the map
$r_C$ (see \eqref{extP}) at the point $[\LLL]\,\in\,\PPPb$. The local-to-global spectral sequence gives
the following commutative diagram with exact rows, in which we simplify the Ext-sheaves by the formulas like
$\EXT^i(\LLL,\,\MMM)\,=\,\LLL^{-1}\otimes\MMM$ for $i\,=\,0$ and $\LLL^{-1}\otimes\MMM
\otimes\NNN_{\Gamma/Y}$ for $i\,=\,1$:
$$
\xymatrix{
0 \ar[r] & H^1(\OOO_\Gamma) \ar[r] \ar [d] &
 \Ext^1_Y(\LLL,\,\LLL)   \ar[r]  \ar^{\beta_*}[d] &
                                                                  H^0(\NNN_{\Gamma/Y}) \ar[r] \ar [d] & 0 \\
0 \ar[r] & H^1(\OOO_C) \ar[r]  
      & \Ext^1_S(\LLL_C,\,\LLL_C)  \ar[r]   
                                                                  & H^0(\NNN_{C/S}) \ar[r]  
                                                                  & 0 
}
$$
Note that $\OOO_\Gamma$ has a natural $\OOO_Y$-resolution of the form $\OOO_Y(-\Gamma)\, \longrightarrow\,
\OOO_Y$, whose restriction to $S$ is the natural resolution $\OOO_S(-C)\, \longrightarrow\,\OOO_S$
of $\OOO_C$. So we have
$\NNN_{\Gamma/Y}\big\vert_C\,\simeq\, \NNN_{C/S}$.

To obtain a similar diagram containing the codifferential $\ud\,\check{}_{[\LLL]}r_C\,=\,
\beta_*\check{}$, we dualize and use the Serre duality, keeping in mind the facts that
$\omega_Y$ is trivial and $\omega_S\,\simeq\,\OOO_S(-D)$:
\begin{equation}\label{el2}
\begin{gathered}\xymatrix{
0 \ar[r] & H^1(\OOO_C(-\Delta)) \ar[r] \ar [d] &
      \Ext^1_S(\LLL_C,\,\LLL_C(-D))   \ar[r]  \ar^{\beta_*\check{}}[d] &
                                                                  H^0(\omega_C) \ar[r] \ar [d] & 0 \\
0 \ar[r] & H^1(\OOO_\Gamma) \ar[r]
      & \Ext^1_Y(\LLL,\,\LLL) \ar[r]
                                   & H^0(\omega_\Gamma) \ar[r]
                                                                  & 0
}
\end{gathered}
\end{equation}

The verification that $\beta_*\check{}$ in \eqref{el2} respects the Poisson structures
is straightforward once one invokes the following observation: When 
we represent one of the relevant $\Ext^1$--groups by an extension of $H^1(\EXT^0(\cdot,\,\cdot))$ by
$H^0(\EXT^1(\cdot,\,\cdot)$, the $H^1$ 
part is totally isotropic and the Poisson pairing is completely determined by the pairing between
$H^1(\EXT^0(\cdot,\,\cdot))$ and $H^0(\EXT^1(\cdot,\,\cdot))$. When working on $S$, the result should
be contracted with the Poisson structure of $S$, that is multiplied by $\gamma\,\in\, H^0(S,\,\OOO_S(D))$.
The vertical maps in \eqref{el2} obviously respect the natural pairings between the $H^1$ and $H^0$
spaces in each row. This completes the proof of the theorem.
\end{proof}

\begin{remark}\label{any-d}
The above construction provides the symplectic manifold $\PPPb/V$ whose Poisson reduction is the abelian 
scheme $\JJJ(\CCC/V)$ parametrizing the line bundles of
degree zero on the fibers of $\CCC/V$. We can slightly modify 
this construction to cover the case of torsors $\JJJ^d(\CCC/V)$ for any $d\in\ZZ$. To this end, we generalize 
the definition of the relative Prym variety as follows. Let $\JJJ^{(d,-d)}(\bGamma/H_0)$ be the relative 
Picard scheme parametrizing the line bundles of bidegree $(d,\,-d)$ on the reducible curves $\Gamma_u\,=\,C_u\cup 
C'_u$, where $u\,\in\, H_0$, $C_u\,\subset\, S$
and $C'_u\,\subset\, S'$. We keep unchanged the definitions of the involutions 
$\epsilon$, as well as that of $\iota$, but note that this time $\iota^*$ sends 
$\JJJ^{(d,-d)}(\bGamma/H_0)$ to another component of the relative Picard scheme, namely 
$\JJJ^{(-d,d)}(\bGamma/H_0)$. Nevertheless the composition of maps
$$\tau\,=\,\iota^*\circ\epsilon\,=\,\epsilon\circ \iota^*$$
is again a symplectic involution of $\JJJ^{(d,-d)}(\bGamma/H_0)$. It is clear that the line bundles 
$\LLL_{C_v}\#{\rho^{-1}}^*\LLL_{C_v}^{-1}$ are $\tau$--invariant for any gluing factors $\lambda_i$ at the 
points $x_i\,\in\, C_v\cap C'_v$, where $v\,\in \,V$ and $C'_v\,=\,\rho(C_v)$, and these line bundles fill
the relative Prym variety $\PPPb^d$ of degree $d$, which is a torsor under $\PPPb\,=\,\PPPb^0$. It is
symplectic and has $\JJJ^d(\CCC/V)$ as its Poisson reduction by the same argument as above, provided
the sheaves from $\JJJ^{(d,-d)}(\bGamma/H_0)$ are actually $h$-stable. The stability condition is
equivalent to the inequality $2|d|\,<\, \delta$; for such $d$ our argument shows
that $\PPPb$ is quasi-projective and it has an algebraic symplectic structure
satisfying the condition that the Poisson structure on $\JJJ^d$ is its reduction. For $2|d|
\,\geq\, \delta$, the line bundles $\LLL$ on $C\cup C'$ of bi-degree $(d,\,-d)$ are no
longer $h$--stable. If, say $d\,\geq\, \frac\delta2$, then $\LLL$ is destabilized by the exact triple
$$
0\,\longrightarrow\, \LLL_C(-\Delta)\,\longrightarrow\, \LLL\,\longrightarrow\,\LLL_{C'}\,\longrightarrow\,0,
$$ since $h$ has equal degrees on $C,\,C'$ and $$\deg \LLL_C(-\Delta)\,=\,d-\delta
\,\geq\,\deg \LLL_{C'}\,=\,-d.$$ But these sheaves are still simple, so we obtain the complex analytic
analog of the desired statement by working with the relative analytic moduli space of simple sheaves: assuming
that $2|d|\,\geq\, \delta$, the relative Prym variety $\PPPb^d$ is a complex manifold with a holomorphic symplectic structure, and the Poisson manifold $\JJJ^d$ is its Poisson reduction. 
\end{remark}

\section{Moduli of 1-dimensional framed sheaves}\label{framed}

Start with the same initial data:\, The Poisson surface $(S,\,\gamma)$ and the anticanonical curve
$D\,=\,{\rm Div}_S(\gamma)$, which we will assume to be smooth and connected. 

\begin{defn}\label{fr-def}
Let $\theta\,=\,\theta_D$ be an $\OOO_D$--module, and let $j_D\,:\,D\,\into\, S$ be
the natural embedding. A $\theta$-framed sheaf on $S$ is a pair $(\FFF,\,\phi)$, where
$\FFF$ is an $\OOO_S$-module and $\phi\,:\,\FFF\, \longrightarrow\, j_{D*}\theta$ is a
homomorphism of $\OOO_D$-modules inducing an isomorphism $\phi_D\,:\,\FFF\big\vert_{D}\,\isoto\,\theta$ when
it is restricted to $D$. Two framed sheaves $(\FFF,\,\phi)$ and $(\GGG,\,\psi)$ are isomorphic if there is
an $\OOO_S$-isomorphism $f\,:\,\FFF\,\isoto \,\GGG$ such that $\phi\,=\,\psi\circ f$.
\end{defn}

In the sequel we will consider the situation when $\theta$ is a line bundle on $D$ and the sheaves $\FFF$ are 
pure 1-dimensional. Such a sheaf $\FFF$ can only be $\theta$--framed if the support of $\FFF$ contains $D$ as an 
irreducible component and $\FFF\big\vert_D\,\simeq\,\theta$. As in Section 1, we will not distinguish in writing
between the 
sheaves supported on closed subschemes of $S$ and the sheaves on $S$ obtained as their direct images by the 
inclusion map of the support into $S$. Thus we will simply write $\LLL$ instead of $j_{Z*}(\LLL)$ when 
considering a line bundle on a curve $Z\,\subset\, S$ as an $\OOO_S$--module, and also write $\phi\,:\,\FFF\, 
\longrightarrow\, \theta$ instead of $\phi\,:\,\FFF\, \longrightarrow\, j_{D*}\theta$ to denote a framing of 
$\FFF$.

Consider the universal family $\CCC/V$ of smooth connected curves $C$ over the open part 
$V$ of the Hilbert scheme of curves $\Hilb_S$ in $S$, singled out by the conditions that $C\cap D
\,\neq\,\varnothing$ and $C$ intersects $D$ transversely while lying in a fixed semiample
integral cohomology class of $S$. For each curve $C$ from this family, denote $\widetilde C\,=\,C\cup D$; the
curves $\widetilde C$ fit into the family $\widetilde\CCC/V$ of nodal curves over $V$ with fixed component
$D$. The classifying morphism for this family embeds $V$ as a locally closed subscheme in another component
of $\Hilb_S$; let us denote by $W_0$ the image of this embedding, so that $\widetilde\CCC\,\longrightarrow\,
W_0\,\simeq\, V$
is the restriction of the universal curve over $\Hilb_S$ to $W_0$.
Let now $W$ be a sufficiently small open neighborhood of $W_0$ in $\Hilb_S$ parametrizing deformations
of the curves from $\widetilde \CCC$ with at worst nodal singularities which do not acquire irreducible
components other than $D$; denote by $\ZZZ\,\longrightarrow\, W$ the universal curve over it. We have
$$\widetilde\CCC\,=\,\ZZZ\times_{W}W_0\,=\,\ZZZ\big\vert_{W_0};$$
the curve $Z_t$ is nodal and it is irreducible for every $t\,\in\, W\setminus W_0$.

For two integers $d_0,\,d_1\,\in\,\ZZ$, we will consider the following Picard schemes:
$$J^{d_0}_D\,=\,\Pic^{d_0}(D),\ \,\, \JJJ_{W}^{d}\,=\,\mathbf{Pic}^{d}(\ZZZ/W),\ \,\,
\JJJ_{V}^{d_1}\,=\,\mathbf{Pic}^{d_1}(\CCC/V),$$ where $d\,=\,d_0+d_1$ and the sheaves represented
by the points of the relative Picard schemes are identified with the corresponding simple sheaves of
$\OOO_S$--modules. Fixing $d_0,\,d_1$ and $d\,=\,d_0+d_1$ once and for all, we will omit the superscripts,
by simply writing $J_{D},\,\JJJ_{W},\, \JJJ_{V}$. We will also consider the 
restriction $\JJJ_{W_0}$ of $\JJJ_W$ over $W_0\,\simeq\, V$ with its two natural projections:
\begin{equation}\label{e-1}
J_D\,\xleftarrow{\,\,\pi_D\,\,\,}\,\JJJ_{W_0}\,=\,\JJJ_W\times_{W}W_0\,\xrightarrow{\,\,\,\pi_V\,\,}\,\JJJ_V.
\end{equation}
The support of a sheaf $\LLL$ in $\JJJ_{W_0}$ is a reducible curve $\widetilde C\,=\,C_v\cup D$ for some $v
\,\in\, V$, and
$$\pi_D([\LLL])\,=\,[\LLL\big\vert_{D}],\ \ \, \pi_V([\LLL])\,=\,[\LLL\big\vert_{C_v}].$$
By \cite{HM}, $\JJJ_W$ and $\JJJ_{V}$ are Poisson manifolds with the respective Poisson structures
$\PPP_W$ and $\PPP_V$ defined by pairings of type \eqref{poi}. For any $\theta\,\in\, J^{d_0}_D$ the framed
sheaves fill the closed subvariety 
$$
\JJJ_{W}^{\theta}\ =\ \JJJ_{W_0}^{\theta}\ =\ \pi_D^{-1}(\theta),
$$
which we will denote by $\JJJ^\theta$ for notational convenience.
The deformation theory of framed sheaves was studied in \cite{L}, \cite{HL-1}, \cite{HL-2} and \cite{BM}, but
no one among these papers covers the case of torsion framed sheaves. For two dimensional torsion-free
sheaves $\FFF$ framed on $D$ by an isomorphism $\FFF\big\vert_D\,\simeq\, F_D$ for a fixed vector bundle $F_D$ on
$D$, the tangent space $T_{[\FFF]}\mathfrak M_S^{\mathrm{fr},F_D}$ to the moduli space is identified with
$\Ext^1_S(\FFF,\,\FFF(-D))$. It can equally be represented as the hyper-Ext group
$\hExt^1_S(\FFF,\,\FFF\to\FFF\big\vert_D)$. It turns out that the latter formula still holds for 1-dimensional
sheaves with a framing understood in the sense of Definition \ref{fr-def}. 

\begin{theorem}\label{fr-th}
Using the above notation, assume that $\LLL\,\in\,\JJJ_{W_0}^{\theta}$. 
\begin{enumerate}
\item
The tangent space of the framed moduli space at $\LLL$ admits the following natural identifications:
$$
T_{[\LLL]}\JJJ_{W_0}^{\theta}\ =\ \hExt^1_S(\LLL,\,\,\LLL\to\LLL\big\vert_D)\ =\ \Ext^1(\LLL,\,\,\LLL_C(-D)).
$$
\item
This tangent space is naturally self-dual via the skew-symmetric perfect pairing:
\begin{equation}\label{e0}
 \PPP^\theta\, \,:\,\, 
\Ext^1(\LLL,\,\LLL_C(-D))\times \Ext^1(\LLL,\,\LLL_C(-D))
\end{equation}
$$
\ \ \ \ \ \ \ \ \ \ \ \ \ \ \ \ \ \ \ \ \ \ \ \ \ \ \ \ \xrightarrow{\,\,\,(\xi,\eta)\mapsto \xi\cup({i_C}_*\eta)\,\,}\,\, \Ext^2(\LLL,\,\LLL_C(-D))\,\,=\,\,\CC,
$$
where $i_C\,:\,\LLL_C(-D)\,\into\, \LLL$ is the natural inclusion map, and the last isomorphism with $\CC$
is given by the composition of maps:
$$
\Ext^2(\LLL,\,\LLL_C(-D))\,=\,H^1(\EXT^1(\LLL,\,\LLL_C(-D)))\,=\,H^1(\omega_C)\,=\,\CC.
$$

\item The pairing $\PPP^\theta$ in \eqref{e0} defines a Poisson structure on $\JJJ^\theta$ making it
into a symplectic leaf for the Poisson structure $\PPP_W$ on $\JJJ_W$.

\item The restriction map $\pi_V$ in \eqref{e-1} defined by $\LLL'\,\longmapsto\, \LLL'\big\vert_C$ is a Poisson reduction.
\end{enumerate}
\end{theorem}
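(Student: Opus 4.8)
The plan is to establish the four assertions in order, deducing the global statements (3)--(4) from the infinitesimal computations in (1)--(2) together with two facts already recalled: that $\JJJ_W$ and $\JJJ_V$ are Poisson (\cite{HM}), and that the symplectic leaves of $\JJJ_W$ are the loci on which $\FFF\big\vert_D$ is fixed (\cite{Bot-1},\cite{HM},\cite{R}). Throughout, $\LLL$ denotes the framed sheaf on $\widetilde C=C\cup D$ and $\LLL_C=\LLL\big\vert_C$.

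\textbf{(1).}\, I would first set up the framed deformation functor: a first--order framed deformation of $(\LLL,\phi)$ is a flat deformation of $\LLL$ over $\CC[\epsilon]$ together with an extension of the trivialisation $\phi$ along $D$. As for locally free sheaves (\cite{HL-1},\cite{HL-2},\cite{BM}) such data are classified by the hyper--Ext group $\hExt^1_S(\LLL,\,\LLL\to\LLL\big\vert_D)$ of the two--term complex $[\LLL\xrightarrow{\phi}\LLL\big\vert_D]$; since the argument never uses local freeness it applies verbatim to our torsion sheaf, which is exactly the case not treated in the cited references. It then remains to identify the complex. The framing is the restriction onto $\theta=\LLL\big\vert_D$, which is surjective; a local computation at a node (where $\widetilde C=\{xy=0\}$, $C=\{y=0\}$, $D=\{x=0\}$) shows its kernel to be the subsheaf of $\LLL$ supported on $C$ and vanishing at $\Delta$, namely $\LLL_C(-\Delta)$, and $D\big\vert_C=\Delta$ gives $\LLL_C(-\Delta)=\LLL_C(-D)$. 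Hence $[\LLL\to\LLL\big\vert_D]\simeq\LLL_C(-D)$ in degree $0$, so $\hExt^1_S(\LLL,\,\LLL\to\LLL\big\vert_D)=\Ext^1_S(\LLL,\LLL_C(-D))$. Equivalently, the long exact sequence of $0\to\LLL_C(-D)\xrightarrow{i_C}\LLL\to\theta\to0$ together with the isomorphism $\Ext^0_S(\LLL,\LLL)\isoto\Ext^0_S(\LLL,\theta)$ (both $\CC$, as $\LLL$ is simple and $D$ connected) identifies $i_{C*}\Ext^1_S(\LLL,\LLL_C(-D))$ with $\ker\bigl(\Ext^1_S(\LLL,\LLL)\to\Ext^1_S(\LLL,\theta)\bigr)=T_{[\LLL]}\pi_D^{-1}(\theta)$; a local--to--global count gives $\dim=2(g+\delta-1)=\dim\JJJ^\theta$.

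\textbf{(2)--(3).}\, For the pairing I would invoke Serre duality on $S$ with $\omega_S\simeq\OOO_S(-D)$. Filtering by the local--to--global spectral sequence,
\[
0\to H^1(\OOO_C(-\Delta))\to\Ext^1_S(\LLL,\LLL_C(-D))\to H^0(\omega_C(\Delta))\to0,
\]
the form \eqref{e0} becomes, after these identifications, the skew form determined by the Serre pairing $H^1(\OOO_C(-\Delta))\times H^0(\omega_C(\Delta))\to H^1(\omega_C)=\CC$, the summand $H^1(\OOO_C(-\Delta))$ being isotropic because two such classes cup into $H^2(\EXT^0_S(\LLL,\LLL_C(-D)))=0$; perfectness of Serre duality on $C$ then yields that $\PPP^\theta$ is a perfect skew pairing. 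For (3), since $\JJJ^\theta=\pi_D^{-1}(\theta)$ is a symplectic leaf of $(\JJJ_W,\PPP_W)$, its form is the restriction of $\PPP_W$ and is therefore automatically closed --- so, in contrast with Theorem \ref{pr-th}(3), no smoothing trick is needed --- and one matches it with $\PPP^\theta$ by noting that the anchor of \eqref{poi} is post--composition with $\gamma\colon\LLL(-D)\to\LLL$, which factors through $i_C$ and hence has image exactly the leaf tangent $i_{C*}\Ext^1_S(\LLL,\LLL_C(-D))$.

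\textbf{(4).}\, The map $\pi_V$ restricts $\LLL$ to $C$; its fibre over $[\LLL_C]$ is the set of gluings of $\LLL_C$ with the fixed framed $\theta$ along $\Delta$, a torsor under $(\CC^*)^\delta/\CC^*\cong(\CC^*)^{\delta-1}$, so $\pi_V$ is a smooth surjective submersion of relative dimension $\delta-1$. Its fibres lie inside the fibres of the support map $\JJJ^\theta\to V$, which are Lagrangian by the argument of statement (4) of the first theorem (two support--preserving classes cup into $H^2$ of a $1$--dimensional sheaf, hence pair to $0$); therefore the fibres of $\pi_V$ are totally isotropic. To see that $\pi_V$ is Poisson I would compare, via the restriction $\varrho\colon\LLL\to\LLL_C$, the codifferential of $\pi_V$ --- which under the self--duality of (2) is the pull--back $\varrho^*$ --- in the commutative diagram with exact rows
\[
\xymatrix{
0 \ar[r] & H^1(\OOO_C(-\Delta)) \ar[r] \ar@{=}[d] & \Ext^1_S(\LLL_C,\,\LLL_C(-D)) \ar[r] \ar[d]^{\varrho^*} & H^0(\omega_C) \ar[r] \ar[d]^{\,\cdot\gamma} & 0 \\
0 \ar[r] & H^1(\OOO_C(-\Delta)) \ar[r] & \Ext^1_S(\LLL,\,\LLL_C(-D)) \ar[r] & H^0(\omega_C(\Delta)) \ar[r] & 0
}
\]
strictly parallel to \eqref{el2}. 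The top row is the cotangent space of $\JJJ_V$ and the bottom is the self--dual tangent space of $\JJJ^\theta$; the crux is that the right vertical arrow is multiplication by the section $\gamma\big\vert_C\in H^0(\OOO_C(\Delta))$ with divisor $\Delta$. As in Theorem \ref{pr-th}(3), each Poisson pairing is determined by the cross pairing between its $H^1$-- and $H^0$--parts; on $\JJJ_V$ this cross pairing carries the extra factor $\gamma\big\vert_C$ prescribed by \eqref{poi}, which is precisely the factor inserted by $\varrho^*$. Hence $\varrho^*$ intertwines the two pairings, and $\pi_V$ realises $(\JJJ_V,\PPP_V)$ as a Poisson reduction of $(\JJJ^\theta,(\PPP^\theta)^{-1})$.

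\textbf{Main obstacle.}\, The decisive computation is the local analysis at the nodes showing that $\varrho^*$ acts on the $\EXT^1$--quotients as multiplication by $\gamma\big\vert_C$, so that the discrepancy $\omega_C\hookrightarrow\omega_C(\Delta)$ between the two rows is exactly the Poisson bivector $\gamma$; this is where the framing data on $\Delta$ and the Poisson structure of $S$ meet, and it is the technical heart of (4). A secondary point, flagged already in the text, is the extension of framed deformation theory to torsion sheaves required in (1), which the hyper--Ext formulation dispatches uniformly.
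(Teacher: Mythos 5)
Your proposal is correct in substance, and for parts (1)--(2) it runs essentially parallel to the paper, which likewise reduces everything to the extension $0\to H^1(\OOO_C(-\Delta))\to\Ext^1(\LLL,\LLL_C(-D))\to H^0(\NNN_{C/S})\to 0$ and to the Yoneda-isotropy of the $E_2^{1,0}$-part, with the cross pairing being Serre duality on $C$. The one real difference in (1) is how the tangent space is located inside $\Ext^1(\LLL,\LLL)$: the paper analyzes deformations of the support $\widetilde C$ directly (Schlessinger's $T^1$, the map $\chi$ with values in $H^0(\NNN_{D/S}(\Delta))$, and the diagram \eqref{rC*}, which exhibit the tangent space as the image of $\Ext^1(\LLL_C,\LLL)$, identified with $\Ext^1(\LLL,\LLL_C(-D))$ only afterwards via the nondegenerate pairing of (2)); your route through $\ker\bigl(\Ext^1(\LLL,\LLL)\to\Ext^1(\LLL,\theta)\bigr)$ is a tidy substitute, and in fact $\Ext^1(\LLL,\theta)$ is an extension of $H^0(\NNN_{D/S}(\Delta))$ by $H^1(\OOO_D)$, so your single kernel condition packages the paper's two constraints (support stays in $W_0$; restriction to $D$ stays $\theta$) into one long exact sequence --- a point worth making explicit, since a priori your kernel lives in $T\JJJ_W$ rather than $T\JJJ_{W_0}$.

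In (3)--(4) you genuinely diverge from the paper, which proves both statements by one self-contained \v Cech residue computation (a finite map $f:\widetilde C\to\PP^1$, two charts, trivializations $\tilde e_i$ of $\OOO_S(-D)$, producing the constant $c$ of \eqref{ec} that relates $\PPP_W\big\vert_{\JJJ^\theta}$, $\PPP^\theta$ and $\PPP_V$). You instead (a) quote the known description of the symplectic leaves of $\PPP_W$ for (3), matching the leaf tangent with $\im i_{C*}$ via the factoring of $\gamma\colon\LLL(-D)\to\LLL$ through $i_C$, and (b) for (4) identify the comparison map on the $\EXT^1$-quotients as multiplication by $\gamma\big\vert_C$. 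Both ideas are sound --- indeed (b) is verifiable by a local computation at a node $xy=0$: lifting $\OOO/(xy)\onto\OOO/(y)$ through the length-one resolutions shows the induced map on $\EXT^1$ is multiplication by the equation $x$ of $D$, which is the conceptual content of the paper's residue calculation. Two caveats, though. First, the leaf theorem you cite is, as the paper's own remark after the theorem notes, \cite[Theorem 3.2]{R} with the framing understood in the \emph{derived} sense; since $D\subset\Supp\LLL$, the underived restriction $\LLL\big\vert_D\simeq\theta$ is not visibly the same condition, and the paper deliberately gives an independent proof rather than lean on this --- if you take the citation route you must reconcile the two notions of framing. Second, your "multiplication by $\gamma\big\vert_C$" assertion is stated as the crux but only gestured at; the paper's \v Cech computation \emph{is} that verification, and it is also what pins down the nonzero constant $c$ (your argument would normalize it away, which is fine for the statement as formulated, but should be said). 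With these two points supplied, your argument is a somewhat cleaner, more structural proof; the paper's is more elementary and self-contained.
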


\begin{proof}
\textbf{(1) and (2).}\,\, For a sheaf $\LLL\,\in\, \JJJ_{W_0}$, we have $\Supp\LLL\,=\,\widetilde C\,=\,C\cup D$.
The tangent space to deformations of $\widetilde C$ is $H^0(\NNN_{\widetilde C/S})$, while local
deformations (smoothings) of the nodes are controlled by the Schlessinger's $T^1$--sheaf $$\TTT^1_{\widetilde C}
\ =\ \EXT^1_{\OOO_{\widetilde C}}(\Omega^1_{\widetilde C},\,\OOO_{\widetilde C})\ \simeq \
\bigoplus_{i=1}^\delta\CC_{x_i}.$$ There is a natural map of sheaves of local deformations $\ell\,:\,
\NNN_{\widetilde C/S}\, \longrightarrow\, \TTT^1_{\widetilde C}$; see \cite[Theorem 6.2]{A}.
The subspace tangent to equisingular deformations preserving the $\delta$ nodes is
$H^0(\widetilde \NNN_{\widetilde C/S})$, where
$$\widetilde \NNN_{\widetilde C/S}\,\,:=\,\,\ker \ell\,\,=\,\,\III_{\Delta,\tilde C}\cdot\NNN_{\tilde C/S};$$
this is at the same time the tangent subspace to deformations preserving the decomposition into
two irreducible components $C'\cup D'$, of which the first one
is a deformation of $C$ and the second one is a deformation of $D$. This manifests itself in the decomposition
$\widetilde \NNN_{\tilde C/S}\,\simeq\, \NNN_{C/S}\oplus \NNN_{D/S}$.
The infinitesimal deformations keeping $D$ as a fixed component of the deformed curve
$\widetilde C$ project to zero in $H^0(\NNN_{D/S})$, meaning it belong to $H^0(\NNN_{C/S})$. Thus, to single out 
$T_{[\LLL]}\JJJ_{W_0}$ in $\Ext^1(\LLL,\,\LLL)$, we start with the short exact sequence
\begin{equation}\label{Def-rC}
0\, \longrightarrow\, H^1(\OOO_{\tilde C})\, \longrightarrow\, \Ext^1(\LLL,\,\LLL)\,\xrightarrow{\,\,\,
p_{\widetilde C}\,\,}\, H^0(\NNN_{\widetilde C/S})\, \longrightarrow\, 0
\end{equation}
and consider the composition of homomorphisms $$\Ext^1(\LLL,\,\LLL)\, \longrightarrow\, H^0(\NNN_{\widetilde C/S})
\, \longrightarrow\, H^0(\NNN_{D/S}(\Delta)),$$
in which the second map comes from the exact sequence $$0\, \longrightarrow\,\NNN_{C/S}\, \longrightarrow\,
\NNN_{\tilde C/S}\, \longrightarrow\,
 \NNN_{D/S}(\Delta)\, \longrightarrow\, 0.$$
Let
$$\chi\,\, :\,\, \Ext^1(\LLL,\,\LLL)\,\, \longrightarrow\,\,H^0(\NNN_{D/S}(\Delta))$$
be this composition of homomorphisms.
Then $$T_{[\LLL]}\JJJ_{W_0}\,\,=\,\,p_{\tilde C}^{-1}\big(H^0(\NNN_{C/S})\big)\,\,=\,\,\ker \chi$$
fits in the exact sequence
 \begin{equation}\label{TJW0}
 0\, \longrightarrow\, H^1(\OOO_{\tilde C})\, \longrightarrow\, T_{[\LLL]}\JJJ_{W_0}
\, \longrightarrow\, H^0(\NNN_{C/S})\, \longrightarrow\, 0.
 \end{equation}
 We see that $\dim T_{[\LLL]}\JJJ_{W_0}\,=\,\dim \JJJ_{W_0}\,=\,C^2+\delta+1$, so $\JJJ_{W_0}$ is a smooth
subvariety of $\JJJ_W$ of dimension $C^2+\delta+1$.
 
As $\pi_D$ is a fibration over the elliptic curve $\JJJ_D$, if we fix $\theta\,\simeq\,\LLL\big\vert_D$, then
we obtain the fiber $\pi_D^{-1}(\theta)\,=\,\JJJ^\theta$ of dimension $C^2+\delta$ whose tangent space
fits in the following commutative diagram:
$$
\xymatrix{
&0\ar [d] &0\ar [d] &&\\
0 \ar[r] & H^1(\OOO_C(-\Delta))\ar[r] \ar [d] &
        T_{[\LLL]}\JJJ^\theta\ar[r] \ar [d] &
             H^0(\NNN_{C/S}) \ar[r]  \ar @{=}[d] & 0 \\
0 \ar[r] & H^1(\OOO_{\tilde C}) \ar[r] \ar [d] &    
        T_{[\LLL]}\JJJ_{W_0} \ar[r] \ar [d] &  H^0(\NNN_{C/S}) \ar[r] & 0\\
        & H^1(\OOO_D) \ar @{=}[r] \ar [d] & H^1(\OOO_D) \ar [d] &&\\
&0&0&&
}
$$
{}From the commutative diagram
\begin{equation}\label{rC*}
\begin{gathered}\xymatrix{
      0 \ar[r] & H^1(\OOO_C(-\Delta)) \ar[r] \ar[d]  &
      \Ext^1(\LLL_C,\,\LLL)  \ar[r]\ar[d]^{{r_C}^*}    &
                                                                  H^0(\NNN_{C/S}) \ar[r] \ar[d]  & 0\\
0 \ar[r] & H^1(\OOO_{\tilde C}) \ar[r]  &
      \Ext^1(\LLL,\LLL)  \ar[r]   &    H^0(\NNN_{\tilde C/S}) \ar[r]  & 0
}
\end{gathered}
\end{equation}
where ${r_C}^*$ is the pullback map associated to the morphism of sheaves $r_C\,:\,\LLL\, \longrightarrow\,\LLL_C$,
we deduce the identification
$$
T_{[\LLL]}\JJJ^\theta\,=\,\im {r_C}_*\,=\, \Ext^1(\LLL_C,\,\LLL)
\,\subset\, T_{[\LLL]}\JJJ_W\,=\, \Ext^1(\LLL,\,\LLL).
$$
Dualizing, we obtain the cotangent space fitting in the dual of the top exact
sequence in \eqref{rC*}:
\begin{equation}\label{z1}
0 \, \longrightarrow\, H^1(\OOO_C(-\Delta))\,  \longrightarrow\, {T\:\check{}\!_{[\LLL]}}\JJJ^\theta
\,=\,\Ext^1(\LLL,\,\LLL_C(-D))\,  \longrightarrow\, H^0(\NNN_{C/S}) \,  \longrightarrow\, 0.
\end{equation}
The local-to-global spectral sequence degenerates at the $E_2$--terms and respects the Yoneda product
modulo signs. More precisely, the cup-product $$E_2^{p_1,q_1}\times E_2^{p_2,q_2}\ \longrightarrow\
E_2^{p_1+p_2,q_1+q_2}$$ differs from the Yoneda product induced on the {$(p_i,\,q_i)$--graded pieces of
$\Ext^{p_i+q_i}(\bdot,\,\bdot)$} by the sign $(-1)^{p_1q_2}$. When working with $1$-dimensional sheaves, the
$E^{1,0}$-part of $\Ext^{1}(\bdot,\,\bdot)$ is Yoneda-isotropic (we have used this property in the proof
of Theorem \ref{pr-th}), so the Yoneda pairing on $\Ext^1$ is determined by the cup-product $$E_2^{1,0}\times
E_2^{0,1}\ \longrightarrow\ E_2^{1,1}.$$ We can choose any section of the surjection in
\eqref{z1} and use it to represent covectors from $T\,\check{}_{[\LLL]}\JJJ^\theta$ by the pairs
$(\xi,\,\eta)\,\in\, H^1(\OOO_C(-\Delta))\times H^0(\NNN_{C/S})$. Taking two such covectors $(\xi_1,\,\eta_1)$
and $(\xi_2,\,\eta_2)$, we obtain the following:
$$
\PPP^\theta((\xi_1,\,\eta_1),\,(\xi_2,\,\eta_2))\ =\ -\xi_1\cdot\eta_2+{\xi_2\cdot\eta_1},
$$
where the product ``$\xi\cdot\eta$'' stands for the natural pairing
\begin{equation}\label{dot}
H^1(\OOO_C(-\Delta))\times H^0(\NNN_{C/S})\, \longrightarrow\, H^1(\NNN_C(-\Delta))\,=\,H^1(\omega_C)\,=\,\CC.
\end{equation}

We thus see that $\PPP^\theta$ is skew-symmetric and non-degenerate. In particular, we have
a natural identification of the tangent and the cotangent space, so we obtain natural isomorphisms
$$
T_{[\LLL]}\JJJ^\theta\,=\ {T\:\check{}\!_{[\LLL]}}\JJJ^\theta\,=\
\Ext^1(\LLL_C,\,\LLL)\,=\ \Ext^1(\LLL,\,\LLL_C(-D)).
$$
This completes the proof of (1) {and} (2).

\textbf{Proof of (3).}\,\, The differential of the inclusion map $\JJJ^\theta\,\into\, \JJJ_W$ is the map ${r_C}^*$ in
the diagram \eqref{rC*}. Dualizing, we obtain
the following:
\begin{equation}\label{z2}
\begin{gathered}
\xymatrix@C=15pt{
0 \ar[r] & H^1(\OOO_{\tilde C}(-D)) \ar[r]\ar@{->>}[d]  &
      \Ext^1(\LLL(D),\,\LLL)  \ar[r] \ar@{->>}[d]^{{r_C}_*}  & H^0(\omega_{\widetilde C}) \ar[r] \ar@{->>}[d] & 0\\
      0 \ar[r] & H^1(\OOO_C(-\Delta)) \ar[r]   &
      \Ext^1(\LLL(D),\,\LLL_C)  \ar[r]    &
                                                                  H^0(\NNN_{C/S}) \ar[r]   & 0\\
}
\end{gathered}
\end{equation}
As above, we can split the surjections in the {rows} of
\eqref{z2} in a compatible way, so that the middle column is the direct sum of the first and the third
columns; such a splitting is not unique, however the result of computing of the pairing does not depend on
the choice of a splitting. We will now verify that the restriction of $\PPP_W$ to $\JJJ^\theta$ is equal, up
to a constant factor, to the bivector field $\PPP^\theta$ defined in part (2).

It suffices to see that
for any pair $(\xi,\,\eta)\,\in\, H^1(\OOO_C(-\Delta))\times H^0(\NNN_{C/S})$ which is the image of a pair
$(\widetilde\xi,\,\widetilde\eta)\,\in\, H^1(\OOO_{\widetilde C}(-D))\times H^0(\omega_{\tilde C})$, we have
{$\xi\cdot\eta\ =\ \widetilde\xi\circ\widetilde\eta$}, where the {dot} ``$\cdot$'' is the
pairing \eqref{dot} and ``$\circ$'' denotes the following one:
\begin{equation}\label{pp}
H^1(\OOO_{\widetilde C}(-D))\times H^0(\omega_{\tilde C})\, \longrightarrow\, H^1(\omega_{\tilde C}(-D))
\,\xrightarrow{\,\,\, \cdot\gamma\big\vert_{\tilde C}\,\,}\, H^1(\omega_{\widetilde C})\,=\,\CC.
\end{equation}

This can be verified by computing both pairings via products of \v Cech cocycles. We can cover $\widetilde C$
by two affine charts 
$\widetilde U_0\,=\,f^{-1}(\PP^{1}\setminus\infty)$ and $\widetilde U_1\,=\,f^{-1}(\PP^{1}\setminus0)$ for a
finite morphism $f\,:\,\widetilde C\,\longrightarrow\,\PP^1$ such that $f(\Delta)
\,\subset\, \PP^{1}\setminus\{0,\,\infty\}$. Then the intersections $U_i\,=\,\widetilde U_i\cap C$ form an
affine covering of $C$. Working on $\widetilde C$, we introduce sections $\widetilde e_i$ trivializing
$\OOO_S(-D)$ over $\widetilde U_i$, $i\,=\,0,\,1$. The triviality of $\OOO_S(-D)\big\vert_{\widetilde U_i}$ imposes some
restrictions on the choice of $f$; we can always specify such a choice. Say, if $\OOO_S(D)\big\vert_D$ is non-trivial,
we can choose $f$ as the map defined by a
generic pencil in $|D|$ containing $D$. Otherwise we define $f$ on $C$ as the map given by a generic pencil in
$|\Delta|$, containing $\Delta$, and extend it to $\tilde C$ by gluing it with an arbitrary non-constant map
$g\,:\,D\,\longrightarrow\,\PP^1$ subject to the condition that $g\big\vert_{\Delta}
\,=\,f\big\vert_{\Delta}$. In both the cases we obtain a map $f\,:\,\widetilde C\, \longrightarrow\,
\PP^1$ such that $c\,=\,f(\Delta)$ is a single point in $\PP^1\setminus\{0,\,\infty\}$. Over
$\widetilde U_i$, the section $\gamma\big\vert_{\widetilde C}$ is represented by some function
$\widetilde\gamma_i\,\in\, H^0(\widetilde U_i,\,\OOO_{\widetilde C})$, and $\widetilde \xi$ can be given by a function
$\widetilde\xi_{01}\,\in\, H^0(\widetilde U_0\cap \widetilde U_1,\,\OOO_S(-D)\big\vert_{\widetilde U_i})$ in the basis
$\tilde e_0$ of $\OOO_S(-D)\big\vert_{\widetilde U_0}$, understood as a 1-cochain of the
covering $(\widetilde U_i)$, or as a
1-cocycle --- which are same because our covering has no triple intersections --- with coefficients
in $\OOO_S(-D)\big\vert_{\widetilde C}$.

Similarly, we can represent $\xi$ by a 1-cochain $\xi_{01}\,\in\, H^0(U_0\cap U_1,\,\OOO_{C})$ viewed as a meromorphic section
of $\OOO_S(-D)\big\vert_{U_0}$ written in terms of the basis $e_0$, where
$e_i\,:=\,\widetilde e_i\big\vert_{U_i}$. Then we have the following:
$$
\widetilde\xi\circ\widetilde \nu\,=\,\sum_{x\in f^{-1}(0)}\Res_x(\widetilde\xi_{01}\widetilde\nu\,
\widetilde\gamma_0\tilde e_0),\ \ \ 
\xi\cdot\nu\,=\,\sum_{x\in f_C^{-1}(0)}\Res_x(\xi_{01}\nu),
$$
where $f_C\,=\,f\big\vert_C$. Since $\gamma$ vanishes on $D$, also $\widetilde\gamma_0$ is identically zero
over $D\cap \widetilde U_0$, it follows that only the points $x\,\in\, f_C^{-1}(0)$ contribute to the sum
of residues expressing $\widetilde\xi\circ\widetilde \nu$.
Furthermore, our hypotheses imply that
$\widetilde\xi_{01}\widetilde\nu\big\vert_{U_0}\,=\,\xi_{01}\nu$, and in view of our choice of
$f$, we know that $\widetilde\gamma_0\widetilde e_0$ takes the
same value
\begin{equation}\label{ec}
0 \ \neq\ c \ \in\ {\mathbb C}
\end{equation}
at all the points of
$f_C^{-1}(0)$, so the first sum of residues is $c$ times the second one. This proves that
$\PPP_W\big\vert_{\JJJ^\theta}\,=\,c\PPP^\theta$.

\textbf{Proof of (4).}\,\, The differential $\ud_{[\LLL]}\pi_V$ is the natural map
$${r_C}_*\ :\  \Ext^1(\LLL_C,\,\LLL)\ \longrightarrow \  \Ext^1(\LLL_C,\,\LLL_C)$$
given by the restriction homomorphism $\LLL\, \longrightarrow\, \LLL_C$. 
Upon dualizing it, we obtain the codifferential  $\ud\,\check{}_{[\LLL]}\pi_V$ as the middle
vertical arrow $r_C^*$ in the diagram
\begin{equation}\label{ec2}
\xymatrix{
0 \ar[r] & H^1(\OOO_C(-\Delta)) \ar[r] \ar @{=}[d] &
      \Ext^1(\LLL_C,\,\LLL_C(-\Delta))  \ar[r]  \ar@{^(->}^{{r_C^*}}[d] &
                                                                  H^0(\omega_C) \ar[r]  \ar@{^(->} [d]& 0 \\
0 \ar[r] & H^1(\OOO_C(-\Delta)) \ar[r] &
      \Ext^1(\LLL,\,\LLL_C(-\Delta))  \ar[r]   &
                                                                  H^0(\NNN_{C/S}) \ar[r]  & 0 \\
}
\end{equation}
We want to see that $\pi_V$ is a Poisson submersion, modulo scaling one of the Poisson structures by a constant
scalar factor. One proves that
$$\PPP_{V,[\LLL_C]}(\zeta_1,\,\zeta_2)\,=\,c\cdot \PPP^{\theta}_{[\LLL]}(r_C^*\zeta_1,\,r_C^*\zeta_2)$$
for any $\zeta_1,\,\zeta_2\,\in \, \Ext^1(\LLL_C,\,\LLL_C(-\Delta))$, where $c$ is the constant
in \eqref{ec}. This can be done in the same manner as above: one represents the $\zeta_i$ by pairs
$(\xi_i,\,\eta_i)\,\in \,H^1(\OOO_C(-\Delta))\times H^0(\omega_C)$, using any splitting of the
commutative diagram in \eqref{ec2} which turns its middle column into the direct sum of the first
column and the third column, while the relevant cup products are computed explicitly in terms of  \v Cech cocycles.
\end{proof}

\begin{remark}
Part (3) of Theorem \ref{fr-th} is a particular case of \cite[Theorem 3.2]{R}. Note that the framing
in loc. cit. is understood in the derived sense.
\end{remark}

{\begin{remark}
In the case when  $d_0\,=\,d_1\,=\,0$, $\theta\,=\,\OOO_D$, both $\JJJ^\theta$ and $\PPPb$ are group schemes,
obtained as commutative extensions of the abelian scheme $\JJJ_V$ by $(\CC^*)^{\delta-1}$. Such extensions
are classified by sections of the relative Ext-sheaf $\EXT^1_{V^{\mathrm{\acute et}}}(\JJJ_V,\,\GG_m^{\delta-1})$.
Denoting the respective sections of extension classes $\mathbf e_{\JJJ^\theta},\, \mathbf e_{\PPPb}$, one
can verify that $\mathbf e_{\PPPb}\,=\,\mathbf e_{\JJJ^\theta}^2$. It is highly likely that on the two
sides we have distinct nontrivial cohomology classes, so $\PPPb$ and $\JJJ^{\OOO_D}$ are supposedly non-isomorphic
symplectic extensions of $\JJJ_V$.
\end{remark}}

\section{Moduli space of framed Higgs bundles}

Let $X$ be an irreducible smooth projective curve, defined over $\mathbb C$,
of genus $g$, with $g\, \geq\, 2$. The canonical line bundle of $X$ will be denoted by
$K_X$. Fix a finite subset
$$
{\mathbb D} \,=\, \{x_1,\, \cdots,\, x_l \}\, \subset\, X.
$$
The divisor $x_1+\ldots + x_l $ on $X$ will also be denoted by $\mathbb D$. A Higgs field
on a holomorphic vector bundle $E\, \longrightarrow\, X$ is a section
$\varphi \,\in\, H^0(X,\, \text{End}(E)\otimes K_X
\otimes{\mathcal O}_X(\mathbb{D}))$. A Higgs bundle on $X$ is a
holomorphic vector bundle on $X$ equipped with a Higgs field.

Take a Higgs bundle $(E,\, \varphi)$ on $X$. The homomorphism $E\, \longrightarrow\,
E\otimes K_X \otimes{\mathcal O}_X(\mathbb{D})$ given by $\varphi$ will also be denoted by
$\varphi$. The Higgs bundle $(E,\, \varphi)$ is called \textit{stable} if for every subbundle
$0\, \not=\, F\, \subsetneq\, E$ with $\varphi(F)\, \subset\, F\otimes K_X \otimes
{\mathcal O}_X(\mathbb{D})$, the inequality
$$
\frac{\text{degree}(F)}{\text{rank}(F)}\ <\ \frac{\text{degree}(E)}{\text{rank}(E)}
$$
holds. For any fixed integer $r\, \geq\, 1$ and $d$, there is a moduli space ${\mathcal
M}_{\rm Higgs}(r)$ parametrizing the stable Higgs bundles of rank $r$ and degree $d$ on $X$
\cite{Hi}, \cite{Ni}. This moduli space ${\mathcal M}_{\rm Higgs}(r)$ is a smooth
irreducible quasiprojective variety over $\mathbb C$ of dimension $2(r^2(g-1)+1)$.

The above moduli space ${\mathcal M}_{\rm Higgs}(r)$ has a natural Poisson structure
\cite{Bo}, \cite{Ma}. This Poisson structure is constructed using a description of the
tangent bundle of ${\mathcal M}_{\rm Higgs}(r)$.

The total space of the line bundle $K_X \otimes{\mathcal O}_X(\mathbb{D})$ will be denoted by
$Y$. Consider the projectivization
\[
\widehat{p}\ :\ S \ :=\ \PP_X(\OOO_X\oplus K_X \otimes{\mathcal O}_X(\mathbb{D}))
\ \longrightarrow \ X.
\]
So we have $Y\ \subset\, S$ as a Zariski open subset; let
\[
D_\infty \ :=\ S\setminus Y \ \subset\ S
\]
be the complement, which is a divisor. The restriction of $\widehat{p}$ to $Y\ \subset\, S$ will
be denoted by $p$.

It can be shown that $S$ has a natural Poisson structure. Indeed, the relative anticanonical
bundle for the projection $\widehat{p}$ has the expression:
$$
K^{-1}_{\widehat{p}}\ =\ \widehat{p}^*(K_X \otimes{\mathcal O}_X(\mathbb{D}))\otimes {\mathcal O}_S(2
D_\infty).
$$
Hence we have $K^{-1}_S \,=\, K^{-1}_{\widehat{p}}\otimes \widehat{p}^* K^{-1}_X\,=\,
{\mathcal O}_S(2 D_\infty)\otimes \widehat{p}^*{\mathcal O}_X(\mathbb{D})$. So $S$ has a
natural Poisson structure whose singular locus is $D_\infty\cup \widehat{p}^{-1}(\mathbb{D})$.

Fix two integers $r$ and $d$, with $r\, \geq\, 1$. Let ${\mathcal M}^\circ_Y$ denote the moduli
space of stable sheaves ${\mathcal L}$ on $S$ satisfying the following conditions:
\begin{enumerate}
\item ${\mathcal L}$ is pure of dimension one,

\item the support of ${\mathcal L}$ is contained in $Y$,

\item the support of ${\mathcal L}$ is lies in the linear system $\widehat{p}^*
(K_X \otimes{\mathcal O}_X(\mathbb{D}))^{\otimes r}$, and

\item $\chi({\mathcal L}) \,=\, d+r(1-g)$, where $g\,=\, \text{genus}(X)$.
\end{enumerate}

The Poisson structure on $S$ produces a Poisson structure on ${\mathcal M}^\circ_Y$.

Note that the pullback $p^*(K_X \otimes{\mathcal O}_X(\mathbb{D}))$ has a tautological section
\begin{equation}\label{vp}
\varpi\, \in\, H^0(Y,\, p^*(K_X \otimes{\mathcal O}_X(\mathbb{D})))
\end{equation}
whose divisor on $S$ is $D_0- D_\infty$, where $D_0$ is the image of the zero section of
$K_X \otimes{\mathcal O}_X(\mathbb{D})$, and any ${\mathcal L}\, \in\, {\mathcal M}^\circ_Y$
has a morphism
\begin{equation}\label{vp2}
{\mathcal T}_{\mathcal L}\, :\, {\mathcal L}\, \longrightarrow\, {\mathcal L}\otimes
(p^*(K_X \otimes{\mathcal O}_X(\mathbb{D})))
\end{equation}
given by tensoring with the section $\varpi$ in \eqref{vp}.

Let ${{\mathcal M}}^\circ_{\rm Higgs}(r)$ denote the moduli space of all stable Higgs bundles $(E,\,
\varphi)$ on $X$ of rank $r$ and degree $d$, where $\varphi\, \in\, H^0(X,\, \text{End}(E)\otimes K_X
\otimes{\mathcal O}_X(\mathbb{D}))$.

There is a natural isomorphism
\begin{equation}\label{vp3}
\Phi\, :\, {\mathcal M}^\circ_Y\, \longrightarrow\, {{\mathcal M}}^\circ_{\rm Higgs}(r)
\end{equation}
that sends any ${\mathcal L}\, \in\, {\mathcal M}^\circ_Y$ to the Higgs bundle
\begin{equation}\label{vp4}
(p_*{\mathcal L},\, p_*{\mathcal T}_{\mathcal L}),
\end{equation}
where ${\mathcal T}_{\mathcal L}$ is the homomorphism in \eqref{vp2}; note
that
$$
p_*({\mathcal L}\otimes(p^*(K_X \otimes{\mathcal O}_X(\mathbb{D}))))\,=\,
p_*({\mathcal L})\otimes K_X \otimes{\mathcal O}_X(\mathbb{D})
$$
by the projection formula.

As mentioned above, both ${\mathcal M}^\circ_Y$ and ${{\mathcal M}}^\circ_{\rm Higgs}(r)$ are equipped
with a Poisson structure. The isomorphism $\Phi$ in \eqref{vp3} is actually Poisson structure
preserving \cite{BBG}.

Let $\widetilde{\mathcal M}_Y$ denote the moduli space of all pairs $({\mathcal L},\, f)$,
where ${\mathcal L}\, \in\, {\mathcal M}^\circ_Y$ such that the support $C$ of $\mathcal L$
intersects the divisor $p^{-1}({\mathbb D}) \, \subset\, Y$ transversally, and $f$ is
a trivialization of $\mathcal L$ over the divisor $C\cap p^{-1}(\mathbb{D})\, \subset\, C$. Let
$$
\beta\,:\, \widetilde{\mathcal M}_Y\, \longrightarrow\,{\mathcal M}^\circ_Y
$$
be the forgetful map that sends any $({\mathcal L},\, f)$ to $\mathcal L$ by simply forgetting $f$.

Let $\widetilde{{\mathcal M}}_{\rm Higgs}(r)$ be the moduli space
of all triples of the form $(E,\, \varphi,\, \psi)$,
where $(E,\, \varphi)\, \in\, {{\mathcal M}}^\circ_{\rm Higgs}(r)$, and $\psi$ lies in a certain class of
trivializations of $E\big\vert_{\mathbb D}$. The class of trivializations in question consists
of all trivializations of $E\big\vert_{\mathbb D}$ that are compatible with $f$ via the isomorphism
$E\,=\,{p_C}_*{\mathcal L}$ (see \eqref{vp4}).

We recall the main result of \cite{LL}.

\begin{theorem}[{\cite{LL}}]\label{tll}
The moduli space $\widetilde{{\mathcal M}}_{\rm Higgs}(r)$ has a natural symplectic structure.
Furthermore, the forgetful map
$$
\pi\, :\, \widetilde{{\mathcal M}}_{\rm Higgs}(r)\, \longrightarrow\, {{\mathcal M}}^\circ_{\rm Higgs}(r)
$$
that sends any $(E,\, \varphi,\, \psi)$ to $(E,\, \varphi)$ by simply forgetting $\psi$
is actually a Poisson morphism.
\end{theorem}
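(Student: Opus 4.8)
The plan is to transport the entire statement, via the spectral correspondence, to a question about framed $1$-dimensional sheaves on the \emph{smooth} surface $S$, and then to rerun the mechanism of Theorem \ref{fr-th} in this new geometry. Since the isomorphism $\Phi$ of \eqref{vp3} already intertwines the Poisson structures of $\mathcal M^\circ_Y$ and $\mathcal M^\circ_{\rm Higgs}(r)$ by \cite{BBG}, I would first lift it to an isomorphism $\widetilde\Phi\,:\,\widetilde{\mathcal M}_Y\,\isoto\,\widetilde{\mathcal M}_{\rm Higgs}(r)$ commuting with the two forgetful maps $\beta$ and $\pi$. Once this is done it is enough to equip $\widetilde{\mathcal M}_Y$ with a symplectic structure $\sigma$ and to prove that $\beta$ is a Poisson reduction: then $\widetilde\Phi$ transports $\sigma$ to the asserted symplectic form, and $\pi=\Phi\circ\beta\circ\widetilde\Phi^{-1}$ is Poisson because $\Phi$ is Poisson and a reduction is in particular a Poisson morphism.

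To construct $\widetilde\Phi$ I would match the two sorts of framing. For $\mathcal L\,\in\,\mathcal M^\circ_Y$ whose support $C$ meets $p^{-1}(\mathbb D)$ transversally, each fibre $C\cap p^{-1}(x_j)$ consists of $r$ reduced points, and cohomology and base change for $p_*$ give a canonical splitting
$$
E\big\vert_{x_j}\ =\ (p_*\mathcal L)\big\vert_{x_j}\ \simeq\ \bigoplus_{y\in C\cap p^{-1}(x_j)}\mathcal L\big\vert_y .
$$
Hence a trivialization $f$ of $\mathcal L$ along the $rl$ points $A\,:=\,C\cap p^{-1}(\mathbb D)$ is exactly the datum of a basis of each $E\big\vert_{x_j}$ adapted to this spectral splitting, which is precisely the admissible class of trivializations $\psi$ entering the definition of $\widetilde{\mathcal M}_{\rm Higgs}(r)$. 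This matching is functorial in families, so it yields $\widetilde\Phi$ intertwining $\beta$ with $\pi$.

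The core of the proof is then the sheaf-theoretic assertion for $(\widetilde{\mathcal M}_Y,\,\beta)$, and here the divisor $A\,=\,C\cap p^{-1}(\mathbb D)$ plays precisely the part that $\Delta\,=\,C\cap D$ played in Section \ref{framed}. I would first show that the tangent space to $\widetilde{\mathcal M}_Y$ at $(\mathcal L,f)$ fits in a short exact sequence
$$
0\,\longrightarrow\, H^1(\OOO_C(-A))\,\longrightarrow\, T_{[(\mathcal L,f)]}\widetilde{\mathcal M}_Y
\,\longrightarrow\, H^0(\NNN_{C/S})\,\longrightarrow\, 0 ,
$$
analogous to \eqref{z1} and \eqref{TJW0}, where $\NNN_{C/S}(-A)\,\simeq\,\omega_C$ by adjunction because $C$ avoids $D_\infty$ and meets $D$ only along $A$. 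Representing classes by pairs $(\xi,\eta)\,\in\, H^1(\OOO_C(-A))\times H^0(\NNN_{C/S})$ through a splitting of this sequence, the trace pairing \eqref{tp} restricts to the skew-symmetric perfect pairing $(\xi_1,\eta_1),(\xi_2,\eta_2)\,\mapsto\, -\xi_1\cdot\eta_2+\xi_2\cdot\eta_1$, with ``$\cdot$'' the cup product $H^1(\OOO_C(-A))\times H^0(\NNN_{C/S})\to H^1(\NNN_{C/S}(-A))=H^1(\omega_C)=\CC$, as in \eqref{dot}. Non-degeneracy yields $\sigma$, and that $\beta$ is a Poisson reduction is checked exactly as in parts (3)--(4) of the proof of Theorem \ref{fr-th}: one writes the two cup products as sums of \v Cech residues and uses that the Poisson section $\gamma$ vanishes along $p^{-1}(\mathbb D)$, so that $\gamma\big\vert_C$ vanishes simply at the points of $A$, to localize and match the pairings up to a nonzero constant.

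The main obstacle is that, in contrast to Sections \ref{prym} and \ref{framed}, the anticanonical divisor is now $D\,=\,2D_\infty+p^{-1}(\mathbb D)$, which is both non-reduced and reducible, so no earlier theorem applies as a black box. What saves the argument is that every sheaf in $\mathcal M^\circ_Y$ is supported in $Y\,=\,S\setminus D_\infty$ and meets $D$ only transversally along its smooth reduced locus $p^{-1}(\mathbb D)$; the double component at infinity therefore enters neither the deformation theory nor the residue computation, and in a neighbourhood of the $rl$ points of $A$ the local picture is indistinguishable from the smooth irreducible situation of Section \ref{framed}. A pleasant consequence is that, since the ambient surface $S$ is here smooth, the closedness of $\sigma$ costs nothing: it follows from the existing closedness results for sheaves on smooth surfaces (\cite{HM}, \cite{Hu}, \cite{Ran}), with no need for the $1$-parameter smoothing trick forced upon us in Section \ref{prym} by the singularity of the doubled surface $Y$. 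The remaining point requiring care is the globalization of these fibrewise identifications over the entire Hitchin family of spectral curves, which is handled by the relative-$\EXT$-sheaf argument already used in part (3) of the first theorem of Section \ref{prym}.
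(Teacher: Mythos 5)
Two structural remarks first. In the paper this statement has no internal proof at all: it is imported verbatim from \cite{LL}, and the paper's logic then runs in the direction opposite to yours --- Theorem \ref{tll} plus the commutative diagram \eqref{j0} is used to \emph{deduce} the surface-side statement (Theorem \ref{tll2}), whereas you propose to prove the surface-side statement directly, in the style of Theorem \ref{fr-th}, and transport it back through $\widetilde\Phi$. That reversal is legitimate in principle, and parts of your argument are sound: the canonical splitting $E\big\vert_{x_j}\simeq\bigoplus_{y\in C\cap p^{-1}(x_j)}\mathcal L\big\vert_y$ (valid because $p_C$ is finite flat and the fibre over $x_j$ is reduced by transversality) is exactly how the paper itself defines the admissible class of trivializations $\psi$, so $\widetilde\Phi$ is essentially tautological here; your adjunction $\NNN_{C/S}(-A)\simeq\omega_C$ is correct, since $C\subset Y$ forces $\OOO_S(D_\infty)\big\vert_C\simeq\OOO_C$ via the defining section of $D_\infty$; and Serre duality $H^1(\OOO_C(-A))^\vee\simeq H^0(\omega_C(A))=H^0(\NNN_{C/S})$ does give a pointwise nondegenerate skew pairing on the proposed tangent spaces.

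The genuine gap is the closedness of your $2$-form $\sigma$ on $\widetilde{\mathcal M}_Y$, which you dismiss as ``costing nothing'' by appeal to \cite{HM}, \cite{Hu}, \cite{Ran}. Those results produce closed forms (or Poisson structures) on moduli spaces of \emph{sheaves} on a surface; $\widetilde{\mathcal M}_Y$ is not such a moduli space --- it is a $(\CC^*)^{rl}/\CC^*$-torsor over an open part of ${\mathcal M}^\circ_Y$, and your form is defined fibrewise by hand. The mechanism that made closedness free in Section \ref{framed} was that $\JJJ^\theta$ was realized \emph{inside} a genuine sheaf moduli space $\JJJ_W$ (line bundles on the reducible curves $C\cup D$) as a symplectic leaf of the Poisson structure $\PPP_W$ of \cite{Bot-1}, \cite{HM}, \cite{R}; that route is blocked here precisely because the anticanonical divisor $D=2D_\infty+p^{-1}(\mathbb D)$ is non-reduced, so there is no ambient relative Picard scheme of curves ``$C\cup D$'' to which those theorems apply --- which is why the paper outsources exactly this point to \cite{LL}, where the symplectic structure on the framed Higgs moduli is constructed globally (not merely pointwise). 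Note also that a pointwise nondegenerate bivector compatible with $\beta$ does not inherit the Jacobi identity from downstairs, since the fibres of $\beta$ are positive-dimensional. A secondary, fixable soft spot: the residue comparison in Theorem \ref{fr-th} relied on choosing the auxiliary map $f$ so that the normalizing function takes a \emph{single} value $c$ at all intersection points; in your setting the $rl$ points of $A$ lie over the $l$ distinct points of $\mathbb D$, so you must exhibit a function on $X$ (or on $\widetilde C$) taking one common value at all of $x_1,\dots,x_l$ --- possible by interpolation, but it needs to be said, otherwise the proportionality constant varies from point to point and the identity $\PPP_{W}\big\vert_{\,\cdot\,}=c\,\PPP^\theta$ fails.
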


There is an isomorphism
$$
\widetilde{\Phi}\, :\, \widetilde{\mathcal M}_Y\, \longrightarrow\, \widetilde{{\mathcal M}}_{\rm Higgs}(r)
$$
which fits in the following commutative diagram of maps:
\begin{equation}\label{j0}
\begin{matrix}
\widetilde{\mathcal M}_Y  & \stackrel{\widetilde{\Phi}}{\longrightarrow} & \widetilde{{\mathcal M}}_{\rm Higgs}(r)
\\
\,\,\, \Big\downarrow \beta && \,\,\, \Big\downarrow \pi\\
{\mathcal M}^\circ_Y & \stackrel{\Phi}{\longrightarrow} &  {{\mathcal M}}^\circ_{\rm Higgs}(r).
\end{matrix}
\end{equation}

In view of \eqref{j0}, from Theorem \ref{tll} we have the following:

\begin{theorem}\label{tll2}
The moduli space $\widetilde{\mathcal M}_Y$ has a natural symplectic structure, and the morphism
$\beta$ is a Poisson map.
\end{theorem}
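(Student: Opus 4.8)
The plan is to transport all the relevant structure from the Higgs side to the sheaf side through the commutative square \eqref{j0}, exploiting that the right-hand vertical arrow is already Poisson and the bottom horizontal arrow is already a Poisson isomorphism. No new geometric input is needed: once the square, the symplectic form on $\widetilde{{\mathcal M}}_{\rm Higgs}(r)$, and the Poisson-preservation of $\Phi$ and $\pi$ are in hand, the statement is a formal consequence.

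First I would define the symplectic form on $\widetilde{\mathcal M}_Y$ as the pullback
$$
\sigma_{\widetilde{\mathcal M}_Y}\ :=\ \widetilde{\Phi}^*\,\sigma,
$$
where $\sigma$ is the symplectic form on $\widetilde{{\mathcal M}}_{\rm Higgs}(r)$ furnished by Theorem \ref{tll}. Since $\widetilde{\Phi}$ is an isomorphism of varieties, its pullback sends closed forms to closed forms and preserves nondegeneracy pointwise (at each point the differential of $\widetilde{\Phi}$ is a linear isomorphism); hence $\sigma_{\widetilde{\mathcal M}_Y}$ is again a holomorphic symplectic form, which proves the first assertion. By construction $\widetilde{\Phi}$ is then a symplectomorphism, and in particular a Poisson isomorphism for the associated Poisson bivectors.

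For the second assertion I would read off from the commutativity of \eqref{j0}, namely $\Phi\circ\beta=\pi\circ\widetilde{\Phi}$, the factorization
$$
\beta\ =\ \Phi^{-1}\circ\pi\circ\widetilde{\Phi}.
$$
Each factor is a Poisson morphism: $\widetilde{\Phi}$ by the previous step, $\pi$ by Theorem \ref{tll}, and $\Phi$ by \cite{BBG}, whence also $\Phi^{-1}$. Since a composition of Poisson morphisms is again Poisson, $\beta$ is a Poisson map, as claimed.

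The main obstacle is not the diagram chase but the compatibility of the several Poisson structures involved: one must be certain that the Poisson structure meant on ${\mathcal M}^\circ_Y$ is exactly the one induced by the Poisson surface $S$, and that this is precisely the structure intertwined by $\Phi$ with the canonical Poisson structure on ${{\mathcal M}}^\circ_{\rm Higgs}(r)$. This identification is what \cite{BBG} supplies; granting it, the remaining arguments are purely functorial. A secondary point, should one wish $\sigma_{\widetilde{\mathcal M}_Y}$ to be \emph{natural} in the sense of an intrinsic deformation-theoretic description of the framed pairs $({\mathcal L},\,f)$ rather than merely transported along $\widetilde{\Phi}$, would be to establish a framed analog of the identifications in Theorem \ref{fr-th} directly on $S$; this refinement, however, is not required for the statement as phrased.
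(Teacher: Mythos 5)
Your proposal is correct and matches the paper's own (very brief) argument: the paper likewise deduces Theorem \ref{tll2} directly from the commutative square \eqref{j0}, Theorem \ref{tll}, and the fact that $\Phi$ is Poisson by \cite{BBG}, with $\widetilde{\Phi}$ an isomorphism transporting the symplectic structure. You have merely made explicit the formal diagram chase ($\beta = \Phi^{-1}\circ\pi\circ\widetilde{\Phi}$) that the paper leaves implicit.
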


\section*{Acknowledgements}
 
This work was supported by the PRCI SMAGP (ANR-20-CE40-0026-01) and the Labex CEMPI (ANR-11-LABX-0007-01). 
The first-named author is partially supported by a J. C. Bose Fellowship (JBR/2023/000003). 

\section*{Mandatory declarations}

No data were generated or used.

\end{document}